\def\Z{\mathbb{Z}}
\newcommand{\dsum}{\displaystyle\sum}
\newtheorem{theorem}{Theorem}
\pgfplotsset{compat=newest}
\let\origmaketitle\maketitle
\def\maketitle{
  \begingroup
  \def\uppercasenonmath##1{} % this disables uppercasing title
  \let\MakeUppercase\relax % this disables uppercasing authors
  \origmaketitle
  \endgroup
}
\begin{document}

\title[Reallocating and Sharing Health Equipments in Sanitary Emergency Situations]{\Large Reallocating and Sharing Health Equipments in Sanitary Emergency Situations:\\ The COVID-19 Case in Spain}

\author[V. Blanco, R. G\'azquez, \MakeLowercase{and} M. Leal]{{\large V\'ictor Blanco$^\dagger$, Ricardo G\'azquez$^\dagger$ \MakeLowercase{and} Marina Leal$^\ddagger$}\medskip\\
$^\dagger$Institute of Mathematics, Universidad de Granada\\
$^\ddagger$Dpt. Statistics, Mathematics and Informatics, Universidad Miguel Hernández}

\address{Institute of Mathematics, Universidad de Granada}
\email{vblanco@ugr.es, rgazquez@ugr.es}

\address{Dpt. Statistics, Mathematics and Informatics, Universidad Miguel Hernández}
\email{m.leal@umh.es}

\date{\today}

\begin{abstract}
In this paper we provide a mathematical programming based decision tool to optimally reallocate and share equipments between different units in emergency situations under lack of resources. The approach is motivated by the COVID-19 pandemic in which many Heath National Systems were not able to satisfy the demand of ventilators, sanitary individual protection equipments or different human resources. Our tool is based in two main principles: (1) Part of the stock of equipments at a unit that is not needed (in near future) could be shared to other units; and (2) extra stock to be shared among the units in a region can be efficiently distributed taking into account the demand of the units. The decisions are taken with the aim of minimizing certain measures of the non-covered demand in a region where a given network structured set of units is given. The mathematical programming models that we provide are stochastic and multiperiod and we provide different robust objective functions.  Since the proposed models are computationally hard to solve, we provide a \textit{divide-et-conquer} math-heuristic approach.  We report the results of applying our approach to the data of the COVID-19 case in different regions of Spain, highlighting some interesting conclusions of our analysis, such as the great increase of treated patients if the proposed redistribution tool is applied.

\end{abstract}

\keywords{Reallocation, Sharing Policies, Robust Integer Linear Programming, Math-Heuristic, COVID-19.}
\subjclass[2010]{91B32, 90C15, 90C10, 90B15.}

\maketitle

\section{Introduction}

The high contagiousness of the COVID-19 virus has caused the rapid propagation of the virus all over the world; and this has led to the sudden need of hospitalization of a large amount of patients affected by COVID-19, many of them in Intensive Care Units (ICU). This abrupt and massive increase in the number of hospitalizations has bring some hospital to the collapse and has produced lack of material and human resources, such as diagnostic tests, ventilators, ICU beds, etc., in many hospitals. This extreme situation has given rise to the application of improvised measures, such as the creation of field hospitals or ageisming when assigning (insufficient) ventilators to patients \cite{italia2020,espana2020}, even though this second measure is not allowed by the World Health Organization \cite{who}. Other improvised measure, in Spain for instance, were the hire of ventilators between hospitals in different cities just to cover the demand at a specific time of the pandemic, without taking into account demand forecasting \cite{22andalucia,10extremadura,9murcia,11galicia}, or distributing test packages between regions by population instead of by number of infected citizens \cite{global}, etc. In these situations in which the number of COVID-19 infected citizens and the gravity of their disease are distributed heterogeneously in a country or state, it is necessary to provide efficient strategies to distribute equipment and share the available resources between different hospitals or units in order to efficiently satisfy the demands.

Allocating/reallocating resources is a recurrent application of Operations Research and a wide variety of situations and tools analyzing the efficient  distribution of goods can be found in literature  (see, e.g.  \cite{bodson02,gomar02,hegazy99,michaud08,wang11}, among many others), although most of them are cost-oriented, i.e., they are designed to minimize some function of the transportation or set-up costs. However, in emergency situations, satisfying in adequate time the needs of the society affected by any damage, beyond its distribution cost, is crucial. Thus, an efficient resource allocation and collaboration among the different units which may supply materials is essential. In the COVID-19 situation, the life of sick patients depends on the use of ventilators, and the non-infection of health personnel depends on the availability of sanitary protection equipments. Therefore, designing strategies to share equipments between hospitals will help to reduce the impact of this crisis and avoid a lack of human resources able to cut the pandemic. Furthermore, national and regional governments are receiving extra stock of equipments, provided  by international suppliers or by different private initiatives (as the home-made confection of face-masks,  3D-printing of helmets, etc.) and sharing this stock among the different units should be done by mean of the demand of each of the units instead of population-based sharing, as those that have been applied.  In September 2020, the pandemic has already provoked close to one million worldwide deaths~\cite{wiki}, which reflects the urgent necessity of cooperation of the different agents to palliate the effect of the virus.

In this paper we propose a general framework for the reallocation of any type of equipments and sharing available and extra stock between the different units. Our proposal allows us to generalize some of the approaches that have been recently proposed, including them as special cases. In particular, recently in \cite{mehrotra2020model}, the authors propose a stochastic multiperiod planning model to allocate and reallocate ventilators to treat critical patients. They perform it for a particular network with a central agency taking the reallocation decision and federal agencies and states deciding on the percentage of available ventilators to share, by minimizing the expected non-covered demand. 

The model that we provide considers four main ingredients:
\begin{description}
\item[Distribution Network:] The units and the different types of available logistic platforms are linked in a network-based input information, allowing us to model different policies stablished for the regions in a country. We assume that a network where the products can be distributed is given. %In particular, the sets of local, regional or national logistics centers. They serve as transshipment points needed to distribute the goods along the network. 
\item[Multiperiod:] The planning is performed for a short time horizon, being the approach a multiperiod decision-making tool.
\item[Uncertainty:] Since the demand of equipments in each unit is stochastic along time (it depends on the number of sick or infected citizen), we incorporate this uncertainty in the model.
\item[Robust Decisions:] Our model decides on the optimal way to reallocate and share extra stock based on different aggregation measures of the non-covered demand along the planing horizon, minimizing different robust objective functions, as the the maximum (by units) non-covered expected demand; the maximum (by time periods) non-covered expected demand, the maximum (by regions) non-covered expected demand; the overall non-covered expected demand\footnote{Note that although overall non-covered expected demand is not formally a robust function, for the sake of generality we will also analyze this case.}, and also their minimax regret counterparts that provide robust solutions with a \textit{good} performance under any of the possible uncertain situations that may happen.
\end{description} 

Apart from the above, our model has different particularities that can be adequately tuned to be adapted to each situation. We limit the number of deliveries from each unit to  avoid the use of excessive resources needed to load, unload and mount equipments. We also upper bound the amount of equipments to be delivered from each unit to a percentage of the available stock at each period, allowing hospitals to fix such a percentage based on their risk level.

It is clear that Mathematical Programming plays an important role in the design of optimal strategies to (re)distribute goods and determine sharing policies.The above characteristics of our model would require to incorporate to our methodology different elements which makes the problem specially difficult to solve. On the one hand, our problem will be modeled as a Stochastic programming problem~\cite{birge2011introduction}, in which a function of the expected non-covered demand (which is the uncertain data in our problem) is minimized. Second, we want to find fair solutions that do not harm the weakest units, using Robust objective functions, of type minmax~\cite{campbell2008routing,ogryczak2000inequality,ye2017fair} and minmax regret~\cite{con07, con18, gut96, lopez13}. Also, our problem consists of a planning model in multiple periods, being necessary to deal with  multiperiod problems~\cite{BENMOHAMED2020102,GHOLAMI2020102297,PUN2019754,SHIN2019102170}. 

%Under these settings, we propose a mathematical programming approach to decide, within the time horizon, the amount of product that should be delivered between units.

The rest of the paper is organized as follows. In Section \ref{sec:2}, we describe the input elements to derive a model for the problem. Section \ref{sec:3} is devoted to present the mathematical programming models for the problem. The math-heuristic approach is described in Section \ref{sec:4}. The analysis of our model for the Spanish data is reported in Section \ref{sec:5}. Finally, we draw some conclusions and further extensions in Section \ref{sec:6}.

\section{Preliminaries}\label{sec:2}

We analyze here the problem of distributing goods on a network with several particularities, as described above. In this section we describe the elements involved in the problems and introduce the notation and the problem under study. 

Let us consider a weighted directed graph $G=(N,A; \mathbf{w})$ where:
\begin{itemize}
\item $N=\{1, \ldots, n\}$ represents the different units in the distribution system. Some nodes may represent hospitals or (local, regional or national) health logistic centers. 
\item $A$, the set of arcs, indicates the available direct links between units. 
\item $\mathbf{w}$, the set of weights, origin-destination times (in days) to transfer equipments between the arcs. These weights include the times needed to load, deliver, unload and mount the equipments.
\end{itemize}
For the sake of presentation we assume that a single type of product is distributed along the network, although our approach can be easily adapted to distribute different types of goods (in which case, a set of weights for each product being distributed must be provided).

In Figure \ref{fig:1}, we illustrate a hierarchical network that may represent the situation in many countries when distributing health equipments. There, solid circles represent different hospitals. They are supplied by local logistic centers (stars) which are at the same time supplied by regional logistic centers (squares). All the regions are supplied by a national logistic center (empty circle). The lines indicate the links in which the products can be distributed. %One may consider either that all the  directions can be used to receive and deliver, or that a part of them may be able to send but not to receive.

\begin{figure}
    \begin{center}
\begin{tikzpicture}[scale=0.4]
    \node[star,fill=black, inner sep=1.5pt,minimum size=1pt] at (0,-1) (center1) {};
    \node[star,fill=black, inner sep=1.5pt,minimum size=1pt] at (-1,3) (center2) {};
    \node[star,,fill=black, inner sep=1.5pt,minimum size=1pt] at (3,-1) (center3) {};
    \node[diamond,draw, inner sep=1.5pt,minimum size=1pt] at (0.5,1) (center) {};
    
    \draw (center)--(center1);
    \draw (center)--(center2);
    \draw (center)--(center3);
    
    \foreach \n in {1,...,5}{
        \node[circle,fill=black, inner sep=1.5pt,minimum size=1pt] at ([shift=({\n*360/5 + 25}:1cm)]center1) (n\n)  {};
        \draw (center1)--(n\n);
    }
    
    \foreach \n in {1,...,7}{
        \node[circle,fill=black, inner sep=1.5pt,minimum size=1pt] at ([shift=({\n*360/7 + 10}:1cm)]center2) (n\n)  {};
        \draw (center2)--(n\n);
    }
        \foreach \n in {1,...,3}{
        \node[circle,fill=black, inner sep=1.5pt,minimum size=1pt] at ([shift=({\n*360/3}:1cm)]center3) (n\n)  {};
        \draw (center3)--(n\n);
    }
    
        \node[star,fill=black, inner sep=1.5pt,minimum size=1pt] at (7,-1) (center1a) {};
    \node[star,fill=black, inner sep=1.5pt,minimum size=1pt] at (8,3) (center2a) {};
    \node[star,,fill=black, inner sep=1.5pt,minimum size=1pt] at (10,0) (center3a) {};
    \node[diamond,draw, inner sep=1.5pt,minimum size=1pt] at (7.5,1) (centera) {};
    
    \draw (centera)--(center1a);
    \draw (centera)--(center2a);
    \draw (centera)--(center3a);
    
    \foreach \n in {1,...,5}{
        \node[circle,fill=black, inner sep=1.5pt,minimum size=1pt] at ([shift=({\n*360/5 + 25}:1cm)]center1a) (n\n)  {};
        \draw (center1a)--(n\n);
    }
    
    \foreach \n in {1,...,7}{
        \node[circle,fill=black, inner sep=1.5pt,minimum size=1pt] at ([shift=({\n*360/7 + 10}:1cm)]center2a) (n\n)  {};
        \draw (center2a)--(n\n);
    }
        \foreach \n in {1,...,3}{
        \node[circle,fill=black, inner sep=1.5pt,minimum size=1pt] at ([shift=({\n*360/3}:1cm)]center3a) (n\n)  {};
        \draw (center3a)--(n\n);
    }
    
        \node[star,fill=black, inner sep=1.5pt,minimum size=1pt] at (0,6) (center1b) {};
    \node[star,fill=black, inner sep=1.5pt,minimum size=1pt] at (2,10) (center2b) {};
    \node[star,,fill=black, inner sep=1.5pt,minimum size=1pt] at (5,8) (center3b) {};
    \node[diamond,draw, inner sep=1.5pt,minimum size=1pt] at (2.5,8) (centerb) {};
    
    \draw  (centerb)--(center1b);
    \draw (centerb)--(center2b);
    \draw (centerb)--(center3b);
    
    \foreach \n in {1,...,5}{
        \node[circle,fill=black, inner sep=1.5pt,minimum size=1pt] at ([shift=({\n*360/5 + 25}:1cm)]center1b) (n\n)  {};
        \draw  (center1b)--(n\n);
    }
    
    \foreach \n in {1,...,7}{
        \node[circle,fill=black, inner sep=1.5pt,minimum size=1pt] at ([shift=({\n*360/7 + 10}:1cm)]center2b) (n\n)  {};
        \draw  (center2b)--(n\n);
    }
        \foreach \n in {1,...,3}{
        \node[circle,fill=black, inner sep=1.5pt,minimum size=1pt] at ([shift=({\n*360/3}:1cm)]center3b) (n\n)  {};
        \draw (center3b)--(n\n);
    }
    
    \draw (centera)--(centerb);
    \draw (centera)--(center);
    \draw (center)--(centerb);
    
    \node[circle,draw, inner sep=1.5pt,minimum size=1pt] at (3,4) (centerX) {};
    
    \draw (centerX)--(center);
    \draw (centerX)--(centera);
    \draw (centerX)--(centerb);
    
    \end{tikzpicture}
    \end{center}
\caption{Example of the graph structure involving different types of units and links in a health distribution system.\label{fig:1}}
\end{figure}
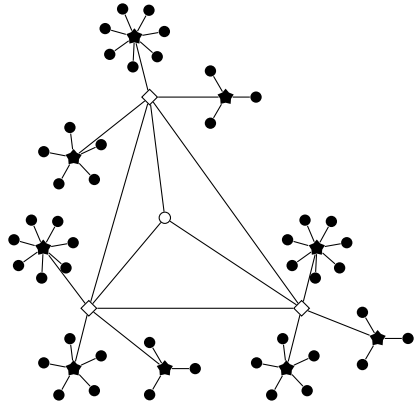

From the graph $G$, we consider the set of pairs of nodes for which there is a path in the graph between them, i.e.
$$
W=\{(i,j): i,j\in N \text{ such that there exist a path from $i$ to $j$}\}.
$$ 

In order to provide an optimal sharing of extra stocks and to account for the different policies of the different regions, we assume that the set of nodes, $N$,  is divided into $p$ groups, $N_1, \ldots, N_p \subseteq N$, not necessarily disjoint, which may represent different regions or clusters of hospitals which may receive extra stock from the same source. Observe that one unit may belong to different regions and receive equipments from different sources. This would be the case of countries that centrally share equipments directly to all (or some) hospitals but also regions with extra or available stock share equipments to the hospitals in the region. In these case, a hospital in the region belongs to two of the $N$-sets, to the one in which all the hospitals belong to and also to the one in which only the hospitals of the regions are included.

In the example drawn in Figure \ref{fig:1}, the different circles linked to a same star may represent one of these sets (provinces), all which are connected to a star linked to the same square represent others sets (regions), and all the square nodes linked with empty circle node is the last set (country). We denote by $P=\{1, \ldots, p\}$, the index set for the groups of regions.

We also consider the following list of input parameters:

\begin{itemize}
\item $T = \{1, \ldots, q\}$: planning horizon (in days). We will perform a distribution planning for a finite number, $q$, of periods.
\item $s_i^0$: initial stock at node $i$,  for all $i\in N$. Each unit is assumed to have this nonnegative initial stock at the beginning of the planning horizon and that is available to be used or delivered. It represents the number of equipment at each unit to cover the demand.
\item $q_k^t$: amount of extra stock to be shared in period $t$ between the nodes in group $N_k$, for $k \in P$. We assume that the units on group $N_j$ may receive a nonnegative number of units at each period. It allows to model situations in which certain regions buy or receive equipment at certain periods and they want to share them among the units that belong to the region. 
\item $Q_i$: upper bound on the number of deliveries from node $i$ to different units, for  $i\in N$. In order to avoid a large (and unrealistic) number of deliveries from units with low demand but high initial stocks, we will restrict the deliveries from each unit. This upper bound will be induced by the capacity of each unit to prepare and load the equipment to deliver.
\item $\ell_{ij}$: length of the shortest path in the graph with origin $i$ and destination $j$, for all $(i,j)\in W$. This length represents the time (in days) needed to deliver an equipment from unit $i$ to unit $j$ on the graph $G$. 
\item $\gamma_i$: proportion of the available stock (after covering its demand) that unit $i$ is willing to delivered to other units, for $i\in N$.
\item $g_i$: upper bound on the number of equipments to be deliver from unit $i$, for $i\in N$.  
\item $a_i$: storage capacity of unit $i$, for $i\in N$.
\end{itemize}

Note that the parameters $Q_i$, $\gamma_i$,  $g_i$ and $a_i$ do not depend on the time period. However, the time dependence could be easily incorporated in the model, if needed.

\subsection*{Demands}

Apart from the above deterministic information, we consider that the demands of equipments required by the units at each period are, as expected, uncertain. We denote the demands as $d(\xi)$, with $\xi$ a random variable. We assume that $\xi$ has finite support, i.e., that a finite number of possible realizations for the demands is possible. Let us denote by $\Omega(T)$ the finite support of possible scenarios, and by $d_i^{t\omega}$ the demand for node $i\in N$ at period $t\in T$ under scenario $\omega\in \Omega(T)$. Let us also denote by $p^\omega$ the probability of scenario $\omega\in \Omega(T)$, with $p^{\omega}\geq 0$, and $\sum_{\omega \in \Omega(T)} p^{ \omega}=1$. Abusing of notation, unless necessary, we will obviate the index set $T$ in the definition of the set of scenarios, i.e., we will denote by $\Omega \equiv \Omega(T)$.

Although we assume in our model that each scenario contains the information of the demands for all nodes and all periods, different assumptions could be done, being our approach still valid.  For instance, it can also be assumed that there exist a different set of possible scenarios for the demands for each period and each region, or a set of possible scenarios for each period (independently of node or the region). For example, in this last case, for each $t\in T$ there would exist a set of scenarios $\Omega_t$. And for each $\omega \in \Omega_t$ we would have a vector of demands $d^{t\omega }=(d_1^{t\omega},\ldots, d_n^{t\omega})$, with an associate probability  $p^{t\omega}$ with $\dsum_{\omega \in \Omega^t}p^{t\omega} =1$.

With the above input information, our approach consists of determining an \textit{optimal} redistribution planning for the equipments between units and a sharing policy by minimizing a function that takes into account the demand that is not able to be covered at each unit and period by the lack of resources. 

\section{Mathematical Programming Model}\label{sec:3}

In this section we provide a family of mathematical programming models to make decisions on: (1) the amount of product to be delivered between the different nodes of the network at each of the periods of the time horizon; and (2) the sharing of extra amounts received at each time period between the different groups $N_1, \ldots, N_p$, by minimizing different measures of the non-covered demand along the time horizon. Due to the existence of uncertainty in the demands, we propose robust solutions that perform \textit{well} under any scenario.  We describe the different mathematical programming approaches which differ on the measures of the non-covered demand that are considered but share the same sets of variables and constraints.
\subsection{Variables}

We consider the following decision variables in our models.

\begin{itemize}
\item $x_{ij}^{t}$: amount of equipments to deliver from unit $i$ to unit $j$ at period $t$,  for all $(i,j)\in W$, $t\in T$. 
\item $s_{ik}^{t}$: stock of equipments received by node $i$ from the sharing of group $N_k$ at time period $t$, for all $i\in N_k$, $k\in \{1, \ldots, p\}$, $t \in T$.
\end{itemize}

Observe that the above decision variables do not depend on the scenarios, since our aim is to provide a solution (distribution) with a good behaviour under any of the scenarios that may occur. 

We also consider the following auxiliary variables, which can be derived using the above decision variables, to ease the exposition of the models:

\begin{itemize}
\item $y_{ij}^{t } = \left\{\begin{array}{cl} 1 &\mbox{if at least one equipment is delivered from $i$ to $j$ at period $t$},\\
0 & \mbox{otherwise,}
\end{array}\right.$, for all {$(i,j)\in W$}, $t\in T$. 

This variable allows us to control the different loads of equipments from a given unit. In particular $\dsum_{j\in N: (i,j) \in W} y_{ij}$ is the overall number of loads that are prepared at period $t$ from unit $i$, and will be upper bounded to avoid an excess of loads from each unit. Observe that this variables can be obtained from the $x$-variables as:
$$
y_{ij}^{t } = \left\{\begin{array}{cl} 1 &\mbox{if $x_{ij}^t\geq 1$},\\
0 & \mbox{otherwise}
\end{array}\right., \quad \text{ for all } (i,j)\in W, t\in T.
$$

\item $S_i^{t}$: accumulated stock in node $i$ until period $t$:
$$
S_i^{t}=s_i^0 + \sum_{k \in P:\atop i \in N_k}\sum_{t^\prime \leq t} s_{ik}^{t^\prime }, \quad \text{for all } i \in N, t\in T.
$$
It is the initial stock plus the extra shared stock received by each of the groups where $i$ belongs to.
\item $R_i^{t}$: amount of product received until period $t$ by unit $i$ from other units:
$$
R_i^{t } = {\dsum_{j\in N:\atop (j,i)\in W}} \dsum_{t^\prime \leq t:\atop t^\prime  + \ell_{ji}\leq t} x_{ji}^{t^\prime } , \quad \text{for all } i \in N, t\in T.
$$
It is the overall sum on all the units from what $i$ is able to receive equipments and in all the periods $t^\prime$ in which the equipments are delivered plus the delivering time are previous or equal to $t$.
\item $D_{i}^{t}$: amount of product delivered until period $t$ by unit $i$:
$$
D_{i}^{t} =  {\dsum_{j\in N:\atop (i,j)\in W}}\dsum_{t^\prime\leq t} x_{ij}^{t^\prime}, \quad \text{for all } i \in N, t\in T.
$$
\item $H_i^{t \omega}$: effective excess at time period $t$ from unit $i$ under scenario $\omega \in \Omega$:
$$
H_i^{t \omega} = \max\{ 0, S_i^{t } + R_i^{t } - D_i^{(t-1) } -d_i^{t \omega} \}, \quad \text{for all } \omega \in \Omega, i \in N, t\in T,$$ 
where $D_i^0 =0$. That is, the stock received until this period, plus the amount received until this period, minus the delivered until the previous period, minus the demand in this period. If the demand is not covered, $S_i^{t } + R_i^{t } - D_i^{(t-1)} -d_i^{t \omega}<0$, then the excess is $0$. This amount represents the number of equipments that are available at the units in each period (under every scenario) after covering the demand at that unit.
\item Non-Covered Demand of unit $i$ at time period $t$ under scenario $\omega \in \Omega$:
$$
NCD_i^{t \omega} = d_i^{t \omega} + D_i^{t } -S_i^{t } - R_i^{t }, \quad \text{for all } \omega \in \Omega, i \in N, t\in T.
$$
Which is computed as the demand of the unit at that period plus the amount of equipments delivered from the unit at that period minus the cumulated stock and the product received until that period. In case $NCD_i^{t \omega} >0$,  the demand plus the delivered are greater than the amount received, being not desirable and producing lack of resources at that period. Otherwise, if $NCD_i^{t \omega}<0$, the demand plus the number of deliveries is less than the received amount, then the demand $d_{i}^{t\omega}$, can be covered with the available equipments.
\item Nonnegative Non-Covered Demand of demand point $i$ at time period $t$ under scenario $\omega \in \Omega$:
$$
\overline{NCD}_i^{t \omega} =\max\{0, {NCD}_i^{t \omega}\}, \quad \text{for all } \omega \in \Omega, i \in N, t\in T.
$$
As mentioned above, the actual demand that is not covered at a unit at a given time period under a given scenario is represented only when $NCD_i^{t \omega}>0$. Thus, this auxiliary variable consider only that positive part, in case it exists, and zero otherwise. These variables will be used in our objective functions instead of $NCD_i^{t \omega}$ to be somehow minimized, since the negative non-covered demand (which represents the positive stock) is not convenient to be minimized because it may provoke an excess of stock in the units. This variable can be modeled in our mathematical programming formulation (in which the $\overline{NCD}$-variables are globally minimized) from ${NCD}_i^{t \omega}$ as follows:
\begin{align*}
\overline{NCD}_i^{t \omega}  &\geq  {NCD}_i^{t \omega}\\
\overline{NCD}_i^{t \omega} &\geq 0.
\end{align*}
for all  $\omega \in \Omega, i \in N, t\in T$.
\end{itemize}

\subsection{Constraints}

The above variables are related by means of a set of linear constraints that allows to represent adequately the reallocation and sharing problem under analysis:

\begin{enumerate}
\item The product to be delivered from a node, in each period and scenario, cannot exceed a percentage of the excess of that node:
\begin{equation}\label{eq:1}\tag{${\rm C}_1$}
{\dsum_{j\in N:\atop (i,j)\in W}} x_{ij}^{t} \leq \gamma_i H_i^{t \omega}, \quad  \forall \omega \in \Omega, i \in N, t \in T.
\end{equation}
The constraint enforces that the overall amount of equipments delivered from $i$ (under scenario $\omega$) in period $t$ do not exceed the  proportion of the stock that is allowed to be delivered from the unit  to other units. It allows each unit to decide the proportion of  the excess of equipments that is willing to deliver to other units. For risk-averse units, the proportion might be small, while for risk-adverse units, the proportion might be fixed to larger amounts.
\item The amount to be delivered from a unit to other are zero unless the $y$ variables take value one and viceversa.
\begin{equation}\label{eq:2}\tag{${\rm C}_2$}
y_{ij}^t \leq x_{ij}^{t} \leq  g_i y_{ij}^{t} \quad \forall i \in N, t \in T.
\end{equation}
In case $y_{ij}^t=0$, then, one cannot deliver any product from unit $i$ to $j$, otherwise and amount between $1$ and $g_i$ can be sent.
\item Upper bound on the number of deliveries from a node  to different nodes in each period:
\begin{equation}\label{eq:3}\tag{${\rm C}_3$}
{\dsum_{j\in N:\atop (i,j)\in W}} y_{ij}^{t} \leq Q_i,  \quad \forall  {(i,j)\in W}, t \in T.
\end{equation}
As mentioned in the definition of the $y$-variables, even in case a large stock is available in unit $i$, it is not realistic to assume that such a unit deliver equipments to as much as units as desired. This constraint limits such a number to $Q_i$.
\item Amount to be shared for each group and each period:
\begin{equation}\label{eq:4}\tag{${\rm C}_4$}
\dsum_{i\in N_k} s_{ik}^{t} = q_k^t,\quad  \forall k \in P, t \in T.
\end{equation}
It is assumed that all the extra stock wants to be shared between the units in $N_k$. One may instead assume that not all the stock needs to be shared, and the constraint may be replaced by the same but with $\leq$ instead the equation.
\item Avoid to simultaneously deliver and receive equipments in the same unit:
\begin{equation}\label{eq:5}\tag{${\rm C}_5$}
 y_{ij}^{t}+y_{jk}^{t}\leq 1, \quad  \forall {(i,j),(j,k)\in W}, t \in T.
 \end{equation}
In particular, in case $k = i$, the constraint enforces that no bidirectional deliveries are allowed at the same period. Note that one may replace the set $W$ by a subset of it to allow some of the units to deliver and receive at the same period. This constraint avoids that units requiring demand deliver equipments.
 
 \item Upper bound on the storage capacity of each hospital and each period:
 \begin{equation}\label{eq:6}\tag{${\rm C}_6$}
 	H_i^{t \omega}\leq a_i, \quad  \forall \omega \in \Omega, i \in N, t \in T.
 \end{equation}
Most units may not have unlimited space to store all the equipments they receive. This constraints avoid this effects and allows receiving material only if they have space to store it or directly use it.
 \end{enumerate}
 
\subsection{Objective Functions}
 
Our stochastic mathematical programming models will have the following common shape:

\begin{align}
\min &\;\;\; \Phi(s,x,y; \Omega)\nonumber\\
\mbox{s.t. } & \eqref{eq:1}-\eqref{eq:6},\nonumber\\
&s_i^{t} \in \Z_+,\quad \forall  i \in N, t \in T,\label{p0}\tag{${\rm StochP}$}\\
&x_{ij}^{t} \in \Z_+, y_{ij}^{t} \in \{0,1\}, \quad \forall   {(i,j)\in W}, t \in T.\nonumber
\end{align}
where $\Phi(\cdot)$ will be a measure of the overall non-covered demands, and will determine the difference between the different approaches.

We consider four different robust objective functions for the mathematical programming problem described above to be minimized:
\begin{itemize}
\item \textit{Maximum non-covered expected demand of the units within the time horizon}:
$$
\Phi_1(s,x,y; \Omega) = \max_{i \in N}  \dsum_{\omega \in \Omega} p^\omega \dsum_{t\in T}  \overline{NCD}_i^{t\omega}
$$
This function equilibrates the expected non-covered demand for all the units along the whole time horizon. 
\item \textit{Maximum non-covered expected demand of the demand points at each period}:
$$
\Phi_2(s,x,y; \Omega)  = \max_{i \in N} \max_{t\in T} \dsum_{\omega \in \Omega} p^\omega \overline{NCD}_i^{t\omega}
$$
Here, one equilibrates, not only units for the whole time horizon, but also the different periods, avoiding tiny non-covered demands in a period at the price of large non-covered demands in others.
\item \textit{Maximum non-covered expected demand in each region within the time horizon}:
$$
\Phi_3(s,x,y; \Omega)  =  \max_{k=1, \ldots, L} \dsum_{\omega \in \Omega}  p^\omega\dsum_{i \in M_k} \dsum_{t\in T} \overline{NCD}_i^{t\omega}
$$
where $M_1, \ldots, M_L \subset N$ is disjoint partition of $N$ in $L$ sets. Instead of finding fair solutions for all units and periods, in $\Phi_3$, the units are aggregated by these regions, being the criterion to find equilibrate regional solutions.
 
 These sets represent different non overlapping regions in which the effect of a distribution planning wants to be measured. In practice, they can be determined by the political borders of a country (states, regions, districts, etc) in which the policies stablished at each of them want to be evaluated. 
\item \textit{Minimize the total non-covered expected demand}:
$$
\Phi_4(s,x,y; \Omega) =  \dsum_{\omega \in \Omega} p^\omega \dsum_{i \in N} \dsum_{t\in T} \  \overline{NCD}_i^{t\omega}
$$
Finally, this function account for the overall non-covered demand for all scenarios, units and period along the time horizon. 
\end{itemize}

The specific shape of the objective functions, among those described above, that is used in our decision tool must be chosen by the decision maker based on its own preferences.  $\Phi_1$ allows one to find fair solutions by units (e.g., hospitals) taking into account the most harmed ones, in terms of the non covered demand, while in $\Phi_3$, the fairness is measured by regions instead of single units. Objective $\Phi_2$ also accounts for equilibrating the non covered demand by periods, avoiding low non covered demand in some periods at the price of high non covered demands in others. Finally, $\Phi_4$ is the classical averaged measure which allows to globally minimize the non covered demand, which may harm some units to benefit others. The decision maker may also run all the models and, in view of the results, decide the most reasonable situation.

Apart from the four objective functions $\Phi_1, \ldots, \Phi_4$, we also consider in our approach their max-regret counterparts. For any $\omega \in \Omega$ we denote by $\Phi^*(\omega)$ the optimal value of the problem above but only under scenario $\omega \in \Omega$, i.e.,
\begin{align*}
\Phi^*(\omega) := \min &\;\;\; \Phi(s,x,y; \{\omega\})\\
\mbox{s.t. } & \eqref{eq:1}-\eqref{eq:6},\\
&s_i^{t} \in \Z_+,\quad \forall  i \in N, t \in T,\\
&x_{ij}^{t} \in \Z_+, y_{ij}^{t} \in \{0,1\}, \quad \forall   {(i,j)\in W}, t \in T.
\end{align*}
for any of the objective functions defined above  ($\Phi\in \{\Phi_1, \Phi_2,\Phi_3,\Phi_4\}$).

We define the regret of a solution $(x,y,s)$ under scenario $\omega \in \Omega$ as:
$$
\Phi^{\rm Regret}(s,x,y; \omega) = \Phi(s,x,y;\{\omega\}, T) - \Phi^*(\omega),
$$
that is, the difference between the actual evaluation in the global objective function $\Phi\in \{\Phi_1, \Phi_2,\Phi_3,\Phi_4\}$ of feasible solution $(s,x,y)$ under scenario $\omega$ and the optimal value obtained for such a single scenario $\omega$.

The minmax regret criterion seeks a solution minimizing the maximum regret among all scenarios, that is, it seeks a solution whose value is as close as possible to the optimal value for every scenario (see, for instance, \cite{aissi09, ben09,kas08,kou97} and the references therein). The regret version of \eqref{p0} is:

\begin{align}
\displaystyle \min \max_{\omega \in \Omega} &\;\;\; \Phi^{\rm Regret}(s,x,y; \omega)\nonumber\\
\mbox{s.t. } & \eqref{eq:1}-\eqref{eq:6},\nonumber\\
&s_i^{t} \in \Z_+,\quad \forall  i \in N, t \in T,\label{p1}\tag{RegretP$_0$}\\
&x_{ij}^{t} \in \Z_+, y_{ij}^{t} \in \{0,1\}, \quad \forall   {(i,j)\in W}, t \in T.\nonumber
\end{align}

The above formulation can be equivalently rewritten as:
\begin{align}
\displaystyle \min  &\;\;\; \alpha \nonumber\\
\mbox{s.t. } & \alpha \geq \Phi(s,x,y;\{\omega\}, T) - \Phi^*(\omega), \quad \forall \omega \in \Omega,\nonumber\\
&\eqref{eq:1}-\eqref{eq:6},\nonumber\\
&s_i^{t} \in \Z_+,\quad \forall  i \in N, t \in T,\label{p2}\tag{RegretP}\\
&x_{ij}^{t} \in \Z_+, y_{ij}^{t} \in \{0,1\}, \quad \forall   {(i,j)\in W}, t \in T,\nonumber
\end{align}
for each  $\Phi \in \{\Phi_1, \Phi_2,\Phi_3,\Phi_4\}$, resulting in four alternative objective functions $\Phi_1^{\rm Regret}$, $\Phi_2^{\rm Regret}$, $\Phi_3^{\rm Regret}$, $\Phi_4^{\rm Regret}$.

%Although all the above models can be formulated as Mixed Integer Linear Programming problems, for large instances, as those in which one wants to determine the distribution planning for a large number of periods, they may need too much time to be solved. In the following section we describe an heuristic approach to solve the above models in this case.

\section{Math-Heuristic Procedure}\label{sec:4}

The mathematical programming formulations described in Section \ref{sec:3} involve integer variables to represent the decision variables concerning the deliveries ($x$) and the shared amounts ($s$). Furthermore, they use other sets of auxiliary binary and continuous variables in order to adequately represent the constraints and the objective functions, as $y$ or those that allows to model the excess of stock or the nonnegative non-covered demand. Thus, the Mixed Integer Linear Programming (MILP) model becomes hard to solve when the number of units ($N$) and periods ($T$) is large, as usual. In this section we describe a math-heuristic approach that allows us to obtain good quality feasible solutions for the problem in reasonable computational times, but still using mathematical programming tools to solve up to optimality some subproblems. The main idea under the heuristic is to split the time horizon in shorter time horizons and \emph{merge} the obtained results of the smaller problems adequately. 

In our approach we split $T=\{1, \ldots, q\}$ into smaller sorted non-overlapping subperiods, $T_1, \ldots, T_K$ with $T_k = \{t_{k-1}, t_{k-1}+1, \ldots, t_{k}-1\}$ for $k=1, \ldots, K$, where $1=:t_{0} < t_{1} < \cdots < t_{K}:=q$. Although one may solve the problem at each of the sets $T_k$ instead of on the whole $T$, the obtained solution is not feasible for our problem since the initial stock at the beginning of each subperiod is not defined, except for the first interval. To overcome this difficulty we propose an approach to adequately \textit{glue} the obtained solutions to construct a feasible solution of the original problem. For the sake of this gluing process, instead of solving the problems within the time periods $T_1, \ldots, T_K$, we consider the subperiods $T_1^+, \ldots, T_K^+$, where $T_k^+ = T_k \cup \{t_k\}$, i.e., the first element in $T_{k+1}$ is added to $T_k$ (except for $k=K$) in order to link them with a single common period. Next, the MILP is solved for $T_1^+$, calculating the reallocation and sharing policies for that interval, for all its time periods instead those of the last one, $t_2$, where, instead, we compute the excess of each unit at that period. This excess is used as input of the initial stock for solving the next subperiod, $T_2^+$. The process is repeated until the complete execution of all the subperiods is performed. Observe that, unless delivering times are zero, the $x$ and $y$ variables in the last period of each interval are zero, since it is not possible to cover any demand. The values of the $x$ and $s$-variables are the sequentially obtained, while the value of the objective function (for each of them) has to be constructed once the procedure is terminated. In Algorithm \ref{alg} we show the pseudocode of proposed procedure. 
\begin{algorithm}[h]
\SetAlgoLined

% initialization\;

Choose $t_{0}, t_{1}, \ldots, t_{K}$ such that $1=:t_{0} < t_{1} < \cdots < t_{K}:=q$.

Define:
\begin{itemize}
\item $T_{k} = \{t_{k-1}, t_{k-1}+1, \ldots, t_{k}-1\}$ for $k=1, \ldots, K$,
\item $T_k^+ = T_k \cup \{t_{k}\}$ for $k=1, \ldots, K-1$.
\item $T_K^+=T_K$.
\end{itemize}

 \For{$k=1, \ldots, K$}{
\begin{enumerate}
\item Solve:
\begin{align}
\hspace*{-2.3cm}\min & \hspace*{0.1cm}\Phi(s,x,y; \Omega(T_k^+))\nonumber\\
\hspace*{-2.3cm}\mbox{s.t. } & \eqref{eq:1}-\eqref{eq:6},\nonumber\\
\hspace*{-2.3cm}&s_i^{t} \in \Z_+,\quad \forall  i \in N, t \in T_k^+,\label{pk}\tag{${\rm StochP}(T_k^+)$}\\
\hspace*{-2.3cm}&x_{ij}^{t} \in \Z_+, y_{ij}^{t} \in \{0,1\}, \quad \forall   {(i,j)\in W}, t \in T_k^+.\nonumber
\end{align}
\item Set $\bar s^t_{i_0l}$, $\bar x_{ij}^t$ and $\bar y_{ij}^t$, for $i_0 \in N_l, (i,j) \in W, l \in P$, $t\in T_k$ to the optimal values of \eqref{pk}.
\item Set $s_i^0(k)$ to  $S_i^{t_k} + R_i^{t_k} - D_i^{(t_k-1) }$ for $i\in N$.
\end{enumerate}}
\KwOut{$(\bar s, \bar x, \bar y)$}

 \caption{Heuristic prodedure.\label{alg}}
\end{algorithm}

\begin{theorem}
Algorithm \ref{alg} provides a feasible solution for \eqref{p0}.
\end{theorem}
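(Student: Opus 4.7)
The plan is to show that the triple $(\bar s,\bar x,\bar y)$ returned by Algorithm~\ref{alg} satisfies all integrality, binary and structural constraints of~\eqref{p0} on the full horizon $T$. I would split the conditions into two groups: the \emph{local} constraints \eqref{eq:2}, \eqref{eq:3}, \eqref{eq:4} and \eqref{eq:5}, each involving only decision variables indexed by a single period $t$; and the \emph{cumulative} constraints \eqref{eq:1} and \eqref{eq:6}, which run through the aggregated quantities $S_i^t$, $R_i^t$, $D_i^{(t-1)}$ and $H_i^{t\omega}$.

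For the local group the argument is essentially bookkeeping. Given any $t\in T$ there is a unique $k$ with $t\in T_k$, and by step~2 of the algorithm the values $(\bar s^t,\bar x^t,\bar y^t)$ agree with an optimal solution of~\eqref{pk}. Since~\eqref{pk} is nothing but~\eqref{p0} restricted to $T_k^+\supset\{t\}$, each of \eqref{eq:2}--\eqref{eq:5} and the integrality/binary requirements are inherited at $t$; feasibility of every call to~\eqref{pk}, needed for step~1 to terminate, follows from the trivial policy $x=y=0$ together with any nonnegative integer assignment of $s$ meeting~\eqref{eq:4}, under the mild assumption that the storage capacities $a_i$ can accommodate the extra shared stock.

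For the cumulative group I would work inductively on $k$, with the invariant that $s_i^0(k)$ coincides with the cumulative state $S_i^{t_k}+R_i^{t_k}-D_i^{(t_k-1)}$ evaluated on the already-fixed (concatenated) variables. The base case $k=0$ is the definition $s_i^0(0):=s_i^0$. In the inductive step, step~3 of Algorithm~\ref{alg} defines $s_i^0(k)$ by the same expression but restricted to subproblem~$k$'s variables; applying the inductive hypothesis to $s_i^0(k-1)$ and unfolding the additive definitions of $S_i^t$, $R_i^t$ and $D_i^t$, the two evaluations agree. With the invariant in place, for every $t\in T_k$ the state variables that subproblem~$k$ uses inside~\eqref{eq:1} and~\eqref{eq:6} coincide with those of the full problem, so both inequalities transfer verbatim to~\eqref{p0}.

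The main obstacle is the bookkeeping for shipments that cross a subperiod boundary: a delivery sent at $t\in T_k$ with $t+\ell_{ji}>t_k$ is subtracted from unit $j$'s account through $D_j^{(t_k-1)}$ when forming $s_j^0(k)$, so the reception at $i$ must be visible to subproblem~$k+1$ through its own $R_i$. I would handle this by explicitly requiring the breakpoints $t_0<\cdots<t_K$ to be chosen so that no delivery time $\ell_{ij}$ exceeds the length of the following block, which, combined with the overlap $T_k^+=T_k\cup\{t_k\}$ and with the fact that the decisions at $t_k$ are reassigned to subproblem~$k+1$ by step~2, ensures that every cross-boundary arrival is captured exactly once. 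Under this admissibility hypothesis the inductive invariant propagates, the cumulative constraints hold globally, and $(\bar s,\bar x,\bar y)$ is feasible for~\eqref{p0}.
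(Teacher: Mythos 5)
Your decomposition is the same as the paper's: constraints \eqref{eq:2}--\eqref{eq:5} are separable by the period index and are inherited directly from the subproblems, while \eqref{eq:1} and \eqref{eq:6} are handled by showing that the carried-over initial stock $s_i^0(k)=S_i^{t_k}+R_i^{t_k}-D_i^{(t_k-1)}$ makes the subproblem state $S_i^{t}(k)+R_i^{t}(k)-D_i^{(t-1)}(k)$ coincide with the global one, so that the $H$-variables, and with them both cumulative constraints, transfer. Your inductive invariant is just a more careful writing of the paper's direct telescoping computation, so up to that point the two arguments are identical.

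Where you genuinely depart from the paper is in flagging the cross-boundary shipments, and you are right that this is the delicate point: the telescoping of $R_i$ fails for a delivery $x_{ji}^{t'}$ sent at $t'\in T_k$ with $t'+\ell_{ji}>t_k$, since it is charged to the sender through $D_j^{(t_k-1)}$ but appears neither in $R_i^{t_k}$ (hence not in the next block's initial stock) nor in $R_i^{t}(k+1)$, whose inner sum ranges only over send times $t'\geq t_k$. The paper's proof silently assumes this does not occur (it only remarks that deliveries in the \emph{last} period of each block are zero). However, your proposed repair --- choosing breakpoints so that no $\ell_{ij}$ exceeds the length of the following block --- does not fix it: the missing term is indexed by the \emph{send} time, which lies before $t_k$ no matter how long $T_{k+1}$ is, so it is excluded from $R_i^{t}(k+1)$ for every $t$. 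What does close the gap is either (i) adding to each subproblem the constraints $x_{ji}^{t'}=0$ whenever $t'+\ell_{ji}>t_k$ (such deliveries never increase any $R$ within the block and only increase the sender's non-covered demand, so this costs nothing in the subproblem), or (ii) explicitly crediting in-transit shipments to the next block's receipts. Note also that the miscounting breaks the two cumulative constraints in opposite directions: an underestimated $R_i^{t}$ makes the true $H_i^{t\omega}$ larger, which preserves \eqref{eq:1} (its right-hand side only grows) but can violate the storage bound \eqref{eq:6}. So your diagnosis of the obstacle is correct and sharper than the paper's treatment, but the admissibility hypothesis you impose is not the right one.
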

\begin{proof}
Observe that at each time period, $t$, the obtained solution verifies all the constraints of the model, except \eqref{eq:1} and \eqref{eq:6} which depend on the auxiliary $H$-variables, since they are separable by the index $t$.  For the case of \eqref{eq:1} and \eqref{eq:6}, they depend on the $H$-variables which were defined as:
$$
H_i^{t \omega} = \max\{ 0, S_i^{t } + R_i^{t } - D_i^{(t-1) } -d_i^{t \omega} \},
$$
Thus, they depend on the amount of product received and delivered until the previous period, and also on the stock accumulated until that period (which are accumulated from the initial period to period $t$). Let us denote by:
$$
h_i^{t}(k) =S_i^{t}(k) + R_i^{t }(k) - D_i^{(t-1) }(k), \;\;\forall i \in N, \omega \in \Omega,
$$
the amount of available product in unit $i$ at period $t\in T_k$ before attending the demand of the unit, but after receiving (from other units or shared by its region) equipments and delivering, where 
\begin{eqnarray*}
S_i^{t}(k) &= s_i^0(k) + \dsum_{j\in P:\atop i \in N_j} \dsum_{t^\prime \leq t} s_{ij}^{t^\prime},\\
R_i^{t }(k) &= {\dsum_{j\in N:\atop (j,i)\in W}} \dsum_{t_{k-1} \leq t^\prime \leq t:\atop t^\prime  + \ell_{ji}\leq t} x_{ji}^{t^\prime },\\
D_{i}^{t}(k) &=  {\dsum_{j\in N:\atop (i,j)\in W}}\dsum_{t_{k-1}\leq t^\prime\leq t} x_{ij}^{t^\prime}.
\end{eqnarray*}
for $k \in \{1, \ldots, K\}$, $i\in N$ and $t \in T_k$.

Note that this values are the subperiod counterparts of the global variables defined in our model. 

For $t \in T_k$ we get that:
\begin{eqnarray*}
\begin{split}
h_i^{t}(k) &= S_i^{t_k} + R_i^{t_k} - D_i^{(t_k-1) } + \sum_{j\in P:\atop i \in N_j} \dsum_{t^\prime \leq t} s_{ij}^{t^\prime} + {\dsum_{j\in N:\atop (j,i)\in W}} \dsum_{t_{k-1} \leq t^\prime \leq t:\atop t^\prime  + \ell_{ji}\leq t} x_{ji}^{t^\prime } -  {\dsum_{j\in N:\atop (i,j)\in W}}\dsum_{t_{k-1}\leq t^\prime\leq t-1} x_{ij}^{t^\prime}\\
&= S_i^t + R_i^t - D_i^t
\end{split}
\end{eqnarray*}
Thus, $H_i^{t\omega} = \max\{0, h_i^{t}(k)-d_i^{t\omega}\}$ if $t \in T_k$. Thus, the $H$-variables in our model are adequately recovered from the solutions obtained solving the problem by subperiods. Since for each of the subproblems on $T_k^+$, the constraints:
$$
\dsum_{j\in N:\atop (i,j)\in W} x_{ij}^{t} \leq \gamma_i \max\{0, h_i^t(k)-d_i^{t\omega}\} \text{ and } H_i^{t \omega}\leq a_i
$$
are verified for $t \in T_k$ and $i \in N$, $\omega \in \Omega$ and $k \in \{1, \ldots, K\}$, then \eqref{eq:1} and \eqref{eq:6} are also verified.

\end{proof}

From the above result, we get that our procedure provides feasible solution to our problem, and then gives us upper bounds for the exact optimal values of our problems.

\section{Case Study: Reallocation and Sharing of Ventilators in Spain}\label{sec:5}

One of the main causes for the critical situation in hospitals during the COVID-19 crisis in Spain has been the high demand of invasive mechanical ventilation among severe patients, together with the lack of this resource in some hospitals around the country. Invasive mechanical ventilation is used to assist patients with serious breathing problems (\cite{invasive,mechanical}). 

We devote this section to analyze the reallocation and sharing of invasive mechanical ventilators (from now on ventilators) in two regions in Spain with different demand distribution during the first wave: the region Madrid, in which the pandemic caused a large amount of critical patients and dead, and the region of Andaluc\'ia, in which the situation was slightly less critical. We use the proposed mathematical programming models to determine if the covered demand of patients needing a ventilator could have been significantly improved if reallocation and sharing would have been applied during the first COVID-19 wave, the $49$ days from March 8th to April 25th, 2020.

\subsection{Input Information}

In what follows we describe the input information that we use in our models as well as the results obtained after running them. All the input information that we use in our experiments are available in the GitHub repository \url{https://github.com/vblancoOR/RedistributionCOVID19}.

\subsection*{Graph Structure}

We consider two types of graph structures trying to simulate possible real networks of the regions of Madrid and Andaluc\'ia. While the region of Madrid has 51 hospitals, the region of Andaluc\'ia has 106. 

Note that being the graph structure an input, it can be modified to adjust the reality of the regions. In particular:
\begin{description}
\item[Complete (C):] We consider a complete graph in which all nodes (units) are connected bidirectionally. 
\item[Logistic Centers (LC):] We incorporate logistic centers of provinces and regions, and we consider that each hospital of a province is only  bidirectionally linked with the logistic center of the province, and the logistic centers of the provinces are linked through the regional logistic center. For each of the two considered regions the situation is different:
\begin{itemize}
\item \textit{Region of Madrid}: Since this region has a single province, we assume that the unique logistic center is located in the city of Madrid in \textit{Hospital de la Paz}. 
\item \textit{Region of Andaluc\'ia}: In this region there exist a logistic center in each of its eight provinces. We assume that the regional logistic center is located in the \textit{Hospital de Antequera} in M\'alaga (geographical center of Andaluc\'ia). This graph has 105 arcs. In Figure \ref{Graph_andalucia}, we show, this graph structure in this region. 
\begin{figure}
	\begin{center}
		\input{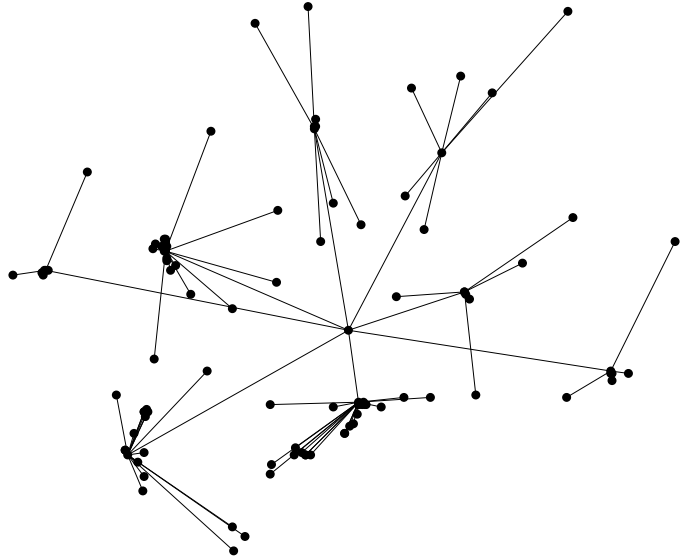}
    \end{center}
	\caption{Graph structure with logistics centers in Andaluc\'ia.}\label{Graph_andalucia}
\end{figure}
\end{itemize}
\end{description}

For the region of Madrid a single $N_1$ region is considered (containing all the 51 hospitals in the region). In Andaluc\'ia, we consider $N_1, \ldots, N_8$ as the sets containing each of the hospitals in the different provinces. The sets $M$ for the objective functions $\Phi_3$ and $\Phi^{\rm Regret}_3$ coincide with the $N$-sets, that is, the regions for which fair solutions are desired are the same as those for which the sharing policies are designed.

Furthermore, in Andaluc\'ia, for the LC-graph structure, we also use the set $N_9$ containing the eight logistic centers (but it is not included in the $M$-sets used in the $\Phi_3$-objective functions).

\subsection*{Initial stocks}

The initial number of invasive mechanical ventilators in each hospital is crucial to conduct an accurate study. However, these official stocks were not publicly available in Spain. In order to estimate them we collected  information from different sources. Since the main reason because patients are admitted to an ICU is to receive ventilatory support \cite{invasive}, we assume that this number coincides with the number of ICU beds, although in some cases this amounts can be slightly larger due to extra ventilators situated at other types of beds. This underestimation may be favorable since those ventilators are available for the hospitals in case more patients than the estimated need a ventilator. The proportion of ICU beds in public and private hospitals of each region~\cite{datadista} together with the the number of beds in each hospital~\cite{CNH} allows us to estimate the initial stock of ventilators of each hospital.

\subsection*{Extra stock}

The extra stock at each time period indicates the new available ventilators to share (if any) in that period among the hospitals of a given region. During the COVID-19 crisis different national or regional governments have bought  and received extra invasive mechanical ventilators as reported by some national newspapers (\cite{22andalucia,10extremadura,9murcia,amancio,11galicia,351madrid,espresp},  among many others). The situation in the two considered regions is different:
\begin{itemize}
\item {\it Region of Madrid:} In this region, the regional government received 351 ventilators at the end of March  and 213 ventilators at the early April~\cite{351madrid}. It also received ventilators from other regions of Spain: Galicia lent 11 ventilators \cite{11galicia} on March 27th, Andaluc\'ia 22 ventilators \cite{22andalucia} at the end of March, Extremadura 10 ventilators \cite{10extremadura} at the end of March and Murcia 9 ventilators \cite{9murcia} in early April.
\item {\it Region of Andaluc\'ia:} In Andaluc\'ia, there is no public information about the extra stock. However, a significant issue in this region is that Andaluc\'ia started to manufacture its own ventilators under the project \textit{Andaluc\'ia Respira} \cite{andresp1,andresp2}. Although these ventilators fulfill the quality requirements established by the Ministry of Health, we did not find any information stating that they have been distributed by the region yet.
\end{itemize}

Apart from the above, some extra stock has also been provided from the Spanish Government and private donations. A total of 2400 ventilators \cite{amancio,espresp} was received to share among all the regions; but there is no information about when and where the ventilators were allocated. We assume that these ventilators were distributed among the regions by means of population. Hence, in the cases of Madrid and Andaluc\'ia, we estimate that they received, in the second week of April, 340 and 429 ventilators, respectively.

\subsection*{Capacities}

\begin{itemize}
\item $Q_i$: The maximum number of deliveries from a hospital depends on the graph structure. For the C-Graph we set it to $5$, to avoid extra work on preparing packages of ventilators for different trucks. For LC-graph, the parameter for the logistic centers was fixed to infinity, while for the hospitals, we set it to $0.4$ times the number of adjacent nodes to the logistic center.
\item $\gamma_i$: The percentage of excess that can be delivered from a hospital was fixed to $0.8$.
\item $g_i$: The amount of ventilators to be delivered by each hospitals is set to 20, as a measure of transport  capacity.
\item $a_i$: The storage limit for the hospitals is set to twice the number of ICU beds in the hospital. For logistic centers the parameter is set to infinity.
\end{itemize}

\subsection*{Shipping times}

The arc weights of the graphs are defined based on the geographical distance between the nodes. Then, we compute the shortest paths between each pair of nodes (in each graph structure), and we relativize the transportation times to the  largest one. Apart of that we include in the shipping times different graph-dependent processing times: one day for the C-graph and 0.1 days per used logistic center in the LC-graph. 

\subsection*{Demands}

We estimated the daily ventilator demands based on real ICU demands of COVID-19 patients in Madrid and Andaluc\'ia, published by the Spanish \cite{spaindata} and Andaluc\'ia \cite{andaluciadata} governments from 08/03/2020 to 25/04/2020 (first COVID-19 wave data).

This data was collected differently in each region. The region of Madrid reported the daily ICU demands, and then, ready to be incorporated to the models. On the other hand, the government of Andaluc\'ia  reported the accumulated demand of ICU. We estimated the daily demands as follows: for each hospital,  we compute the daily number or new COVID-19 patients in ICU and we assume that each of them stays in ICU 21 days (the average number of days the patients stay on a ICU bed during the first wave~\cite{21dias}). Scenario ``\texttt{Real}'' was created with these estimated demands. This scenario can be considered as the closest to the real situation that Spain lived during the first wave. 

Using these demands, we randomly generate two more scenarios as follows:
\begin{itemize}
\item Choose $r_j^-, r_j^+$ uniformly distributed in $[0,0.5]$, for each province $j \in P$. 
\item Choose $\tilde{r}_i^-$ (resp. $\tilde{r}_i^+$) uniformly distributed in $[0,r_j^-]$ (resp. $[0,r_j^+]$), for each hospital $i$ in the province, $j$, and set, for each scenario the following demands:
\begin{description}
\item[Scenario ``\texttt{Pessimistic}'':] 
$
	\tilde{d}_i^t =(1+ \tilde{r}_i^+) d_i^t, \quad i\in N_j, j \in P, t\in T.
$
%\noindent where $d_i^t$ are the estimated demands with the publicly available data. 
\item[Scenario ``\texttt{Optimistic}'':] 
$
\tilde{d}_i^t = (1- \tilde{r}_i^-) d_i^t, \quad i\in N_j, j \in P, t\in T.
$

\end{description}
In case $\tilde{d}_i^t\leq 0$ we set random integer value  in $\{1,2\}$.

\end{itemize}
In Figure \ref{demands}, we show the demands on Scenario \texttt{Real} for the two regions.

\begin{figure}
\begin{center}
	\includegraphics[scale=0.3]{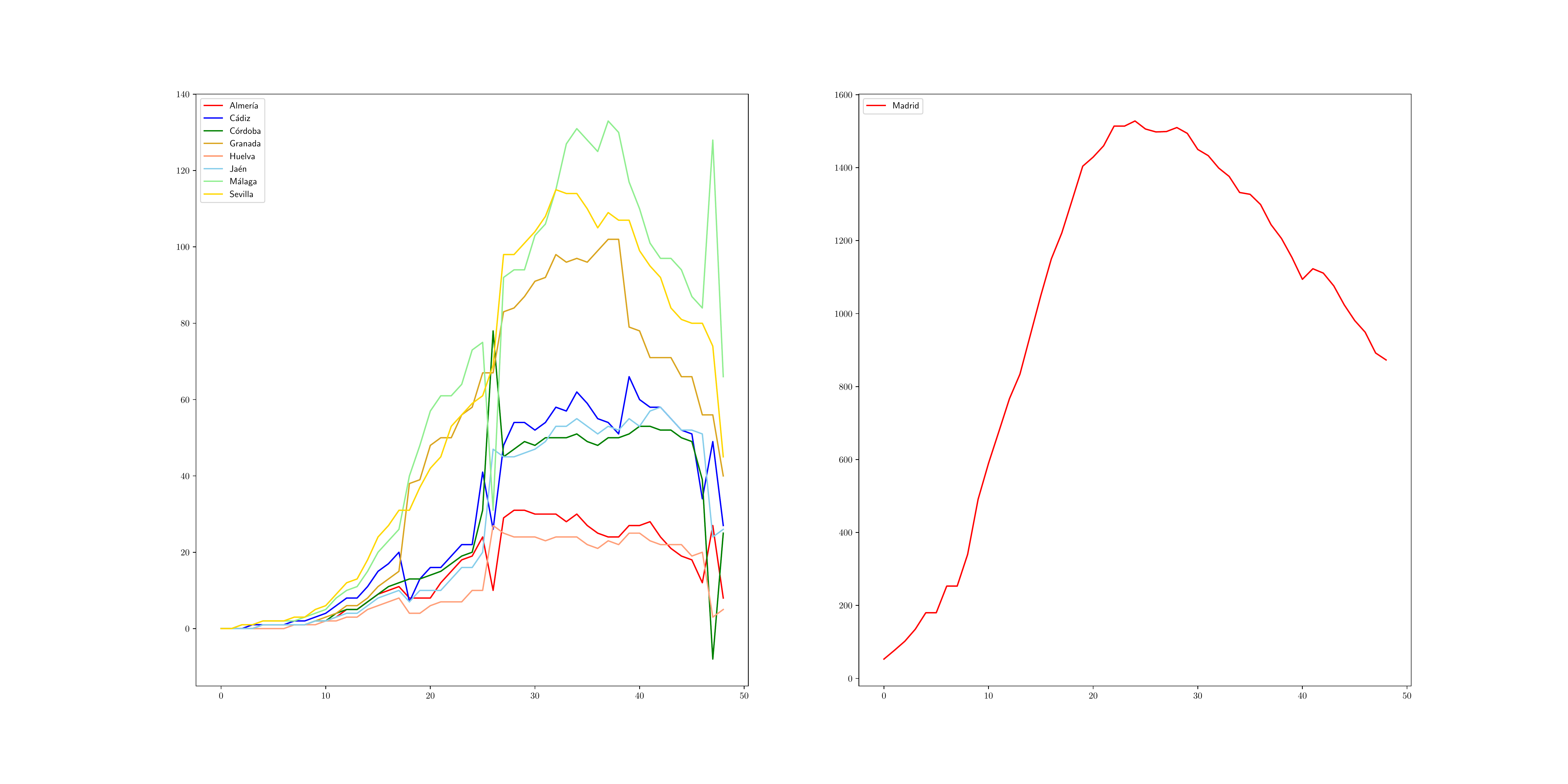}
	\caption{\texttt{Real} demands of ventilators for Region of Andaluc\'ia (left) and Region of Madrid (right).}\label{demands}
\end{center}
\end{figure}

\subsection{Results}

The models have been coded in Python 3.7 and using as optimization solver Gurobi 9.0 in a MacBook Pro with a Core 2 Duo CPU clocked at 2,66 GHz and 4GB of RAM memory. We have run our math-heuristic algorithm partitioning the time horizon into $12$ subperiods. A time limit of 1 hour was fixed for solving the subproblems, although none of our models reached such a limit.

We have applied our approach to the regions of Madrid and Andaluc\'ia. Moreover, we have also run the models in case no sharing is allowed between the hospital. The aim is to compare the obtained redistribution and sharing policies to the real situation in Spain, in which redistribution was not  implemented. However, since there is no information about the sharing policies of the extra stock in the real situation, we assume that it was performed \textit{optimally} according to our models (only fixing in them to $0$ the $x$-variables). Note that we compare our redistribution and sharing proposal to a situation which is better than the actually implemented.

In the following sections, we analyze the results and conclusions obtained through the numerical study. We illustrate them with different graphics and figures for particular configurations of scenarios, objective functions or types of graphs. However, the rest of the figures for the remaining configurations of scenarios, functions or graphs can be found in \url{https://github.com/vblancoOR/RedistributionCOVID19} for the interested readers.

\subsubsection*{Redistribution vs. No Redistribution}

We start by comparing the non-covered demand, that is, the number of patients needing a ventilator that were not attended due to the lack of this resource, if the proposed redistribution is carried out, or not. In Figures \ref{fig:RvsNRMad} and \ref{fig:RvsNRAnd}, in each of the graphics, the continuous red line represents the total non-covered demand at each time period if  the \textit{real} scenario occurs and redistribution is allowed. The dashed red line shows the total non-covered demand at each time period, if the \textit{real} scenario happens and only the redistribution of the extra stock is allowed, but not the sharing of available stock. Remind that this second case represents a situation better than what was actually applied in Spain, since in this case, the redistribution of the extra stock is done optimally. For simplicity, we will refer to this second case as the \textit{without redistribution} case. The continuous and dashed green lines represent, respectively, the total available stock in the two described situations: with or without redistribution. Notice that we solve the models taking into account that any of the three considered scenarios can occur, but we represent in these figures the actual behavior if the obtained solution is implemented when the \textit{real} scenario happens.

We show in Figure \ref{fig:RvsNRMad} the case of the Madrid region, for the graph with logistic centers and objective $\Phi_2^{\rm Regret}$ in the left graphic, and for the complete graph and objective $\Phi_1$ in the right one.  We can observe that in all the cases, when redistribution is not carried out, there exist always available stock but also demand that is not being covered. For instance, in the second graphic we can see that in period $25$, there are around $250$ available ventilators but around $650$ patients that are not attended. However, when redistribution is considered, in the periods in which the non-covered demand is positive, the available stock is almost zero, that is, the available stock is redistributed and used. This implies a significant decrease in the number of non-treated patients. For instance, in the same case described before, the non-covered demand reduces to less that $470$. A similar behaviour can be observed in the rest of objectives, graphs and scenarios. 

In Figure \ref{fig:RvsNRAnd}, we show the case of the Andalucía region, for a graph with logistic centers and objective $\Phi_3^{\rm Regret}$ (right), and for the complete graph and objective function $\Phi_4$ (left). In this case, we can observe that most of the demand is covered in both cases, with and without redistribution. The reason for this is the availability of stock to cover all the demand in most of the periods. However, there is a critical period, between $t=25$ and $t=33$, in which the demand increases (see Figure \ref{demands}) and the non-covered demand is positive for the case without redistribution, even thought there exist more than $200$ available ventilators. It can also be observed that for some objective functions, the non-covered demand in the redistribution case is also positive despite existing available stock. The reason for this can be twofold: 1) Since the problem is solved using the heuristic described in Algorithm \ref{alg} in which the total time period is split into smaller periods, the heuristic is not able to anticipate, for all the objective functions, the rapid increase of the demand after period $24$. 2). The collected information for the provinces of Málaga and Sevilla is not accurate, with a sudden big increase and decrease in period $27$, as can be observed in Figure \ref{demands}, which can not always be efficiently handled. However, in most of the cases, the total non-covered demand is lower for the redistribution case, being this value even zero for some of the objective functions, as it is the case of $\Phi_4$. 

We conclude therefore that redistributing the available stock significantly increases the number of treated patients.

\begin{figure}[h]
\begin{center}
\hspace*{-0.85cm}\includegraphics[scale=0.3]{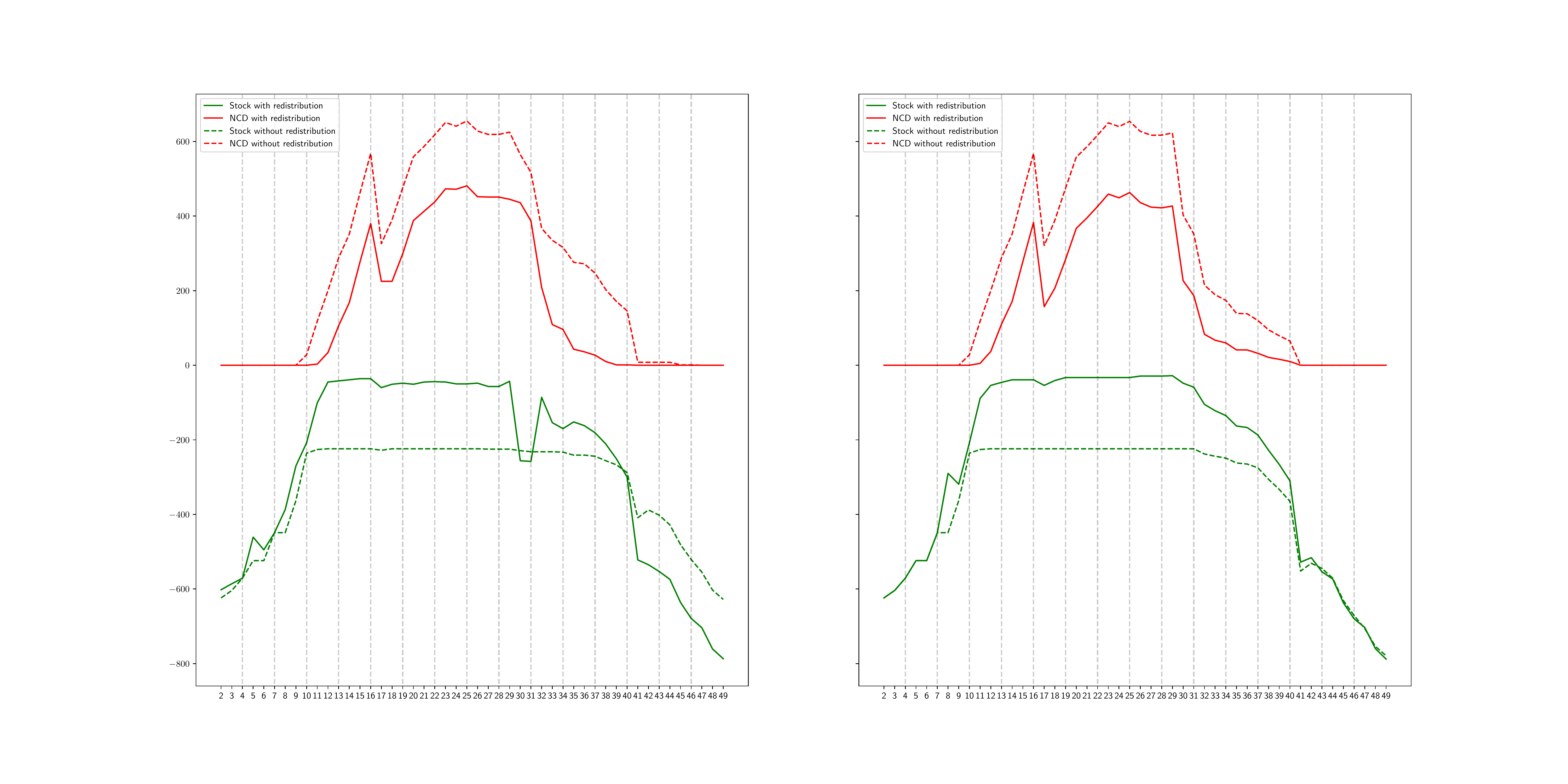}
\end{center}
\caption{Non-covered demand (red lines) and available stock (green lines) at each time period, if the \textit{real} scenario occurs, with (continuous line) and without (dashed line) redistribution, in Madrid, for LC-graph and objective $\Phi_2^{\rm Regret}$ (left) and for C-graph and objective $\Phi_1$(right). } \label{fig:RvsNRMad}
\end{figure} 

\begin{figure}[h]
\hspace*{-0.85cm}\includegraphics[scale=0.3]{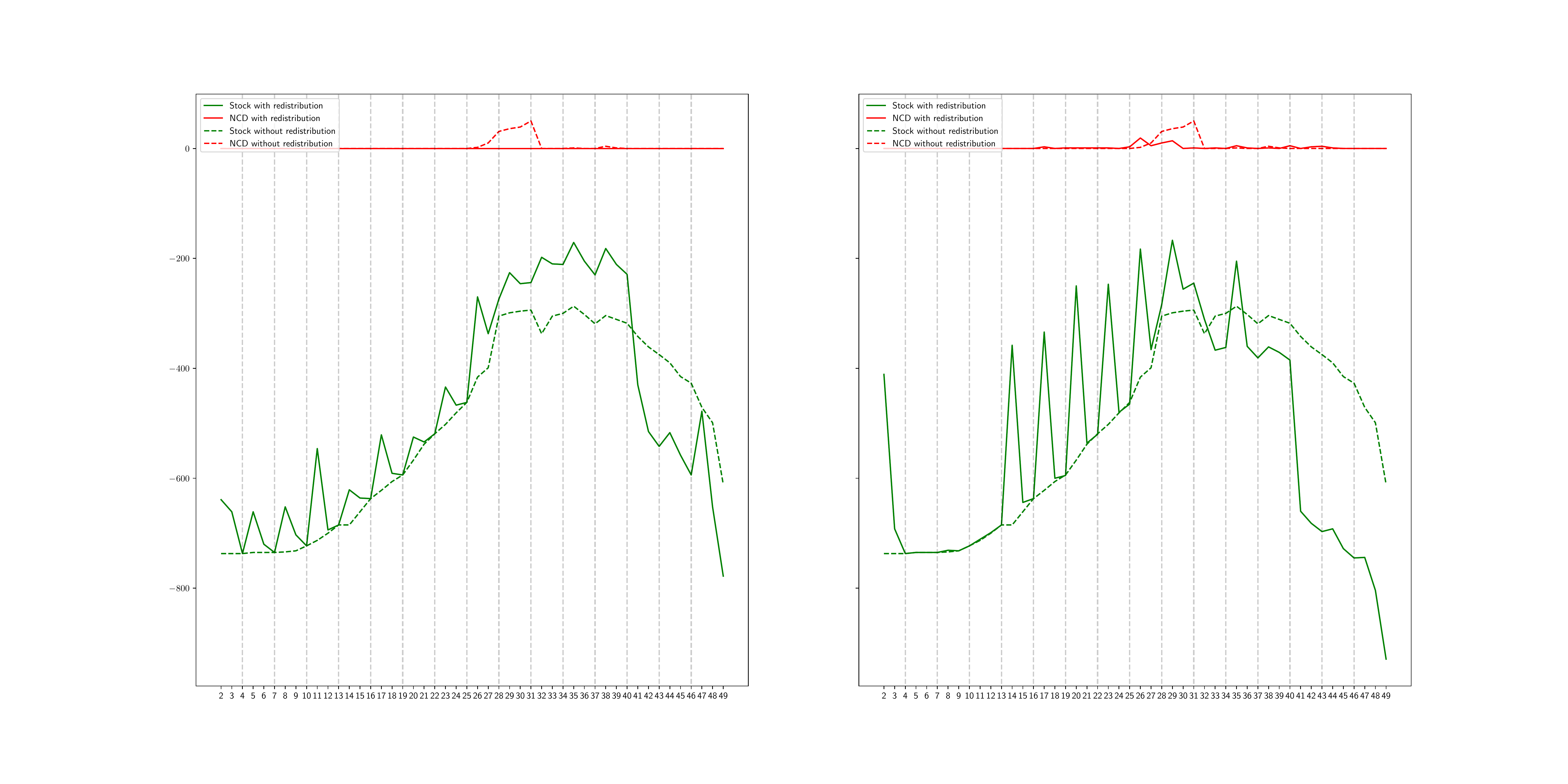}

\caption{Non-covered demand (red lines) and available stock (green lines) at each time period, if the \textit{real} scenario occurs, with (continuous line) and without (dashed line) redistribution, in Andalucía, for LC-graph and objective function $\Phi_4$ (left) and for C-graph and objective function $\Phi_3^{\rm Regret}$(right).}\label{fig:RvsNRAnd}
\end{figure}

\subsubsection*{Comparison of Redistribution and Sharing Policies}$ $\\

We compare in this section the behavior of the redistribution through the time horizon and provinces.

We show in Figure \ref{fig:barras1}, the number of redistributed ventilators through the LC-graphs in the achieved solution, at each time period, for the region of Madrid (left) and Andalucía (right) for objective function $\Phi_3$. We also include in such graphics the demand curves for the three scenarios.  Figure \ref{fig:barras2} shows the same but for the case of C-graphs and objective function $\Phi_3^{\rm Regret}$. We can observe that the amount of redistributed stock is much higher in the region of Andalucía than in the region of Madrid. This is caused by the fact that the number of demanded ventilators in Madrid is much greater than such number in Andalucía. The high demand in most of the hospitals in Madrid make practically nonexistent the availability of stock to share. The existence of stock to redistribute in periods 6 to 10 in Madrid in is due to the lower demand, and the availability of stock in periods 16, 30 and 32 in Madrid in Figure \ref{fig:barras1} is due to the entrance of a high quantity of extra stock: 351, 213 and 115 ventilators, respectively, to the logistic center that it is later redistributed. This effect is not observed in the case of the C-graphs, Figure \ref{fig:barras2}, because here the extra stock is directly distributed among the hospitals, and in this graphic we only show the redistribution among hospital, not the distribution of the extra stock. In the case of Andalucía, the highest amounts of shared ventilators coincide with the periods in which there is lower demand, and therefore more available stock to redistribute anticipating future increases in the demands. In the case of the graphs including logistic centers, there exist more periods with redistribution due to the same effect explained for the case of Madrid, and also to the fact that the delivery constraints imposed to the logistic are less restrictive than for the hospitals.

Note that we are optimizing the non-covered demand, hence, the management of the available stock could maybe have been done better.

\begin{figure}[H]
\hspace*{-0.75cm}\includegraphics[scale=0.3]{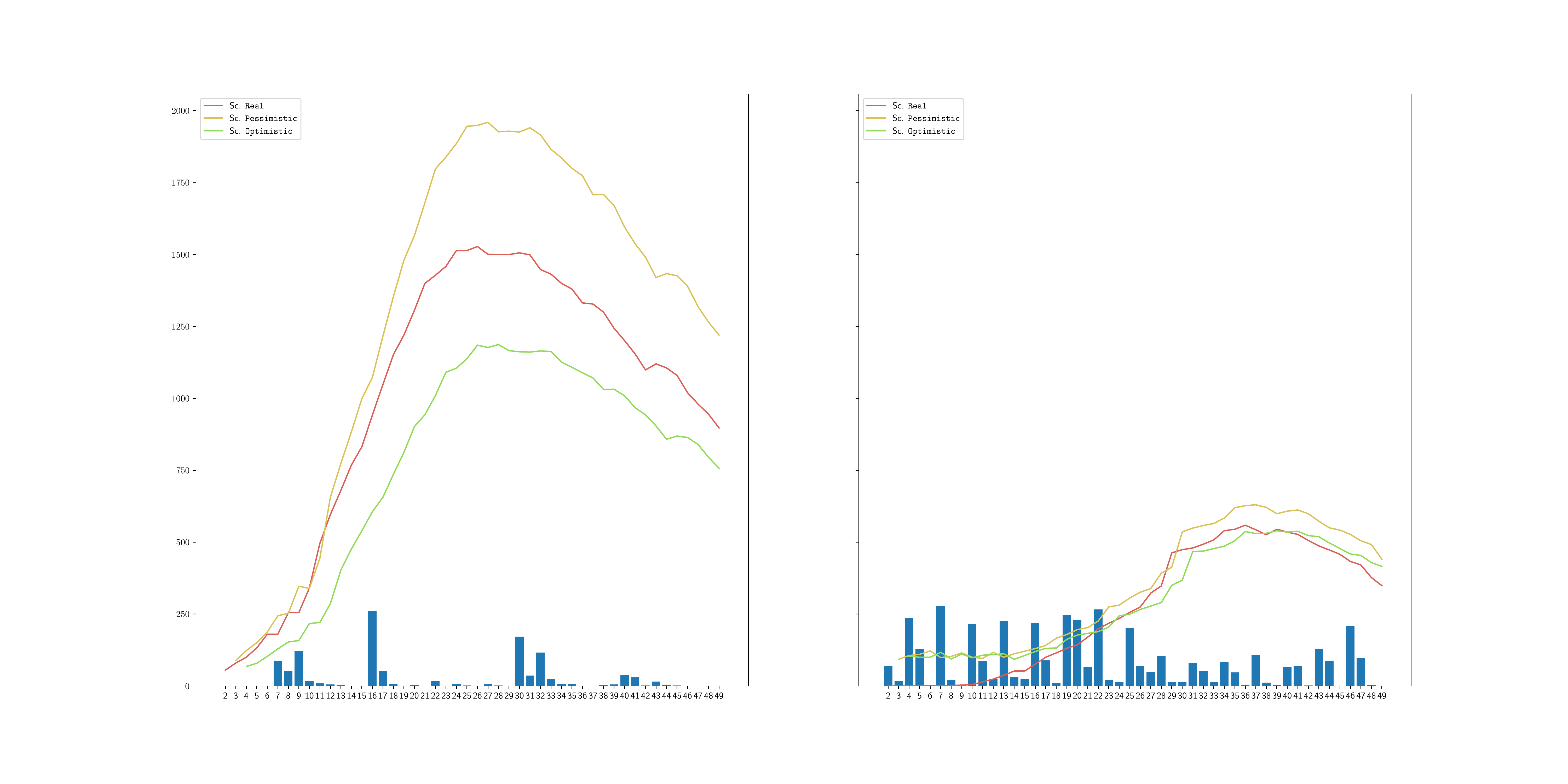}

\caption{Amount of redistributed stock throught the LC-graphs, at each time period, for the objective function $\Phi_3$, for the region of Madrid (left) and the region of Andalucía (right). } \label{fig:barras1} 
\end{figure}

\begin{figure}[H]
\hspace*{-0.75cm}\includegraphics[scale=0.3]{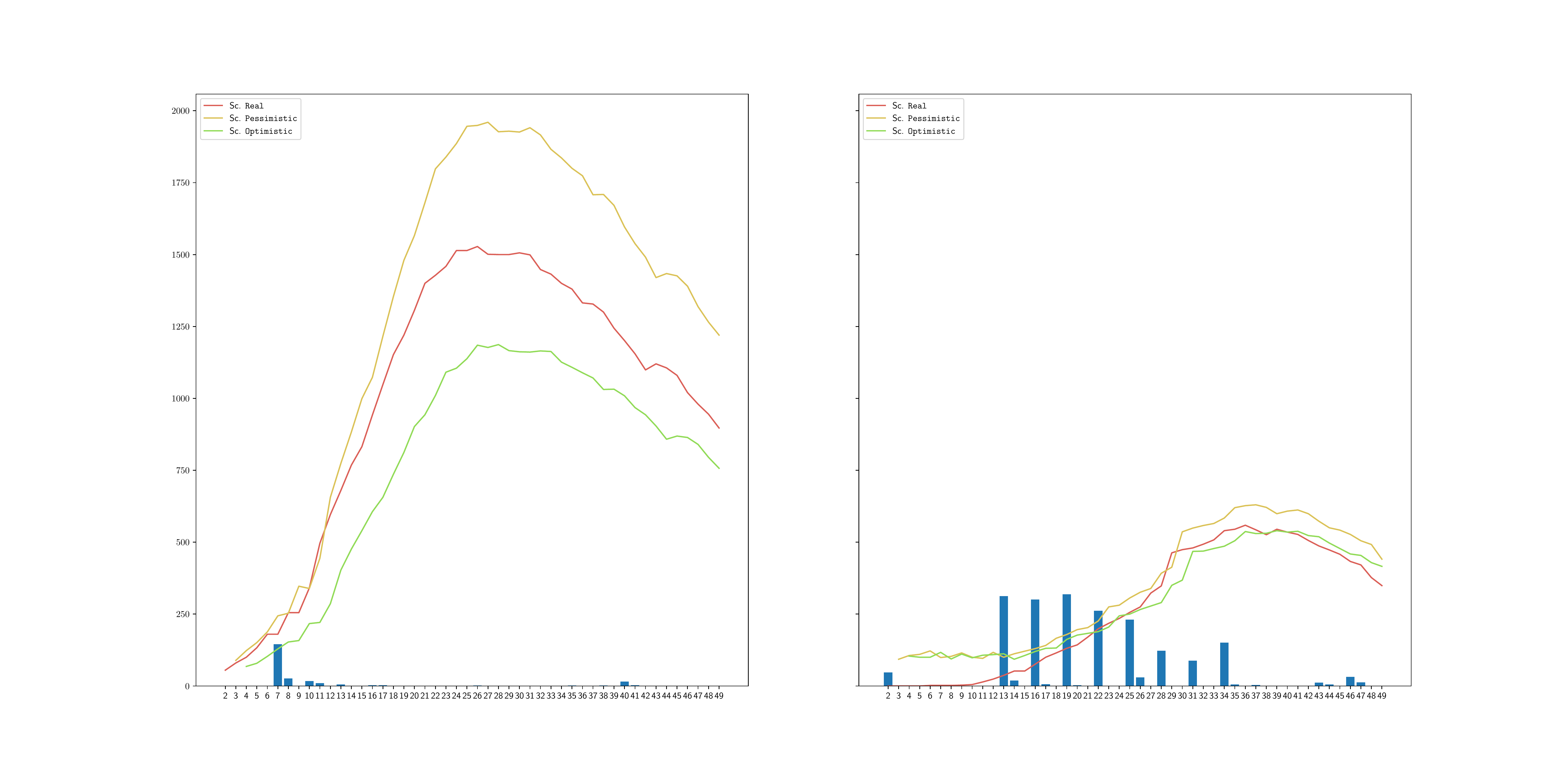}

\caption{Amount of redistributed stock throught the C-graphs, at each time period, for the objective function $\Phi_3^{\rm Regret}$, for the region of Madrid (left) and the region of Andalucía (right). } \label{fig:barras2} 
\end{figure} 

We include in the following Figure \ref{fig:tarta}, three different pie charts. The first one contains the distribution by proportion of population of the regions in Andalucía. In the remaining charts, we show the proportion of extra stock in the obtained solution for the model with redistribution and objective functions $\Phi_4$,  and $\Phi_4^{\rm Regret}$, respectively, for LC-graphs in the first case, and C-graphs in the last case. Note that the share of extra stock is performed in our model based on the demand required by each hospital and not on population, and then, the pie charts reflect that our model allocates the extra stock by demand. We can observe for instance in the diagram for $\Phi_4$ and LC-graphs, the one in the center, that most of the extra stock is allocated in the province of Córdoba, which is not concentrating the highest proportions of population. For the case of the C-graph and objective function $\Phi_4^{\rm Regret}$, the distribution is not mostly concentrated in a single province, it is withal more divided among different provinces. However, also in this case, the distribution is not carried out according to the proportion of inhabitants. For instance, we can see that despite being Sevilla the province with the highest proportion of inhabitants in Andalucía, the proportion of extra stock sent to this province (grey) is one of the lowest ones.

In these pie charts we can also observed the previously described effect: the extra stock is more distributed into different provinces in the the case of the C-graphs that in the case of the LC-graphs. This is due to the fact that it is easier to redistribute later the available stock from the logistic centers than from the hospitals, and also to the fact that the logistic centers have a higher capacity to store stock.

\begin{figure}[H]
\begin{center}
\includegraphics[scale=0.65]{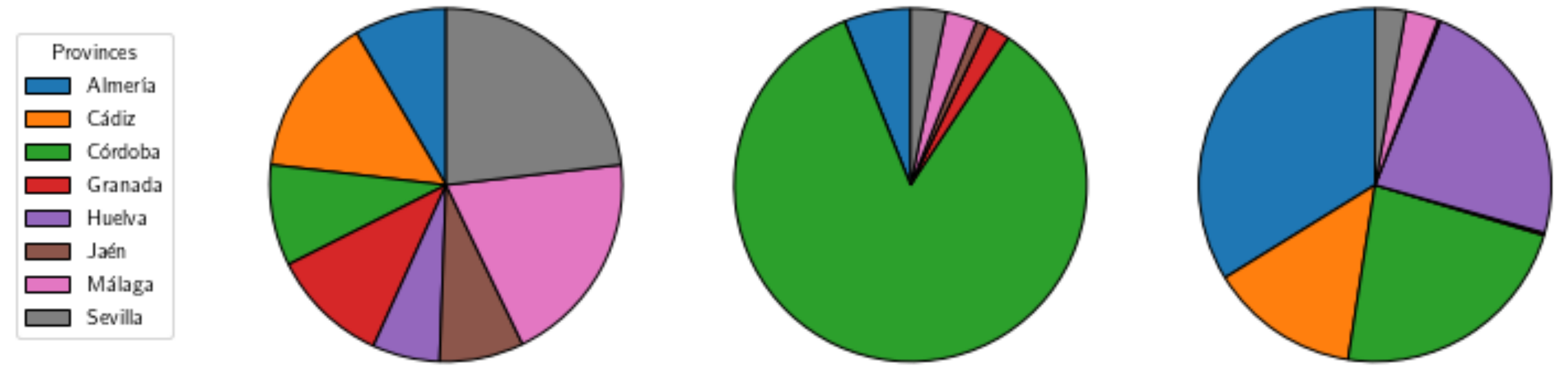}
\end{center}
\caption{Distribution of proportion of extra stock in the provinces of Andaluc\'ia: by population (left), by our model for LC-graph and $\Phi_4$ (center), and C-graph and $\Phi_{4}^{\rm Regret}$(right).}\label{fig:tarta}
\end{figure}

\subsubsection*{Comparison of Non-covered Demand by Scenarios}

We show in this section, in Figures \ref{fig:ScenariosMadrid} and \ref{fig:ScenariosAnd}, the behaviour of the non-covered demand if the achieved solution is implemented in each of the three considered scenarios: optimistic (left), real (center) and pessimistic (right). Each graphic in each of these figures follows the same style that the graphics presented in Figure \ref{fig:RvsNRMad}. Figure \ref{fig:ScenariosMadrid} illustrates the case of Madrid and objective function $\Phi_1^{\rm Regret}$, and Figure \ref{fig:ScenariosAnd} the case of Andalucía and objective function $\Phi_4$. In both cases we can appreciate that the pattern of the non-covered demand is practically the same for the three different scenarios, since the tendency of the demand is the same in the three scenarios, but, as expected, the higher the amount of demand, the higher the non-covered demand. For instance, in the worst moment in Madrid and our approach is applied, if optimistic scenario occurs, the non-covered demand aroun $250$, meanwhile if pessimistic scenario occurs, the non-covered demand is more than $750$. For the case without redistribution, this amounts are around $300$ and more than $1000$.

\begin{figure}[h]
 \hspace*{-1.75cm}\includegraphics[scale=0.22]{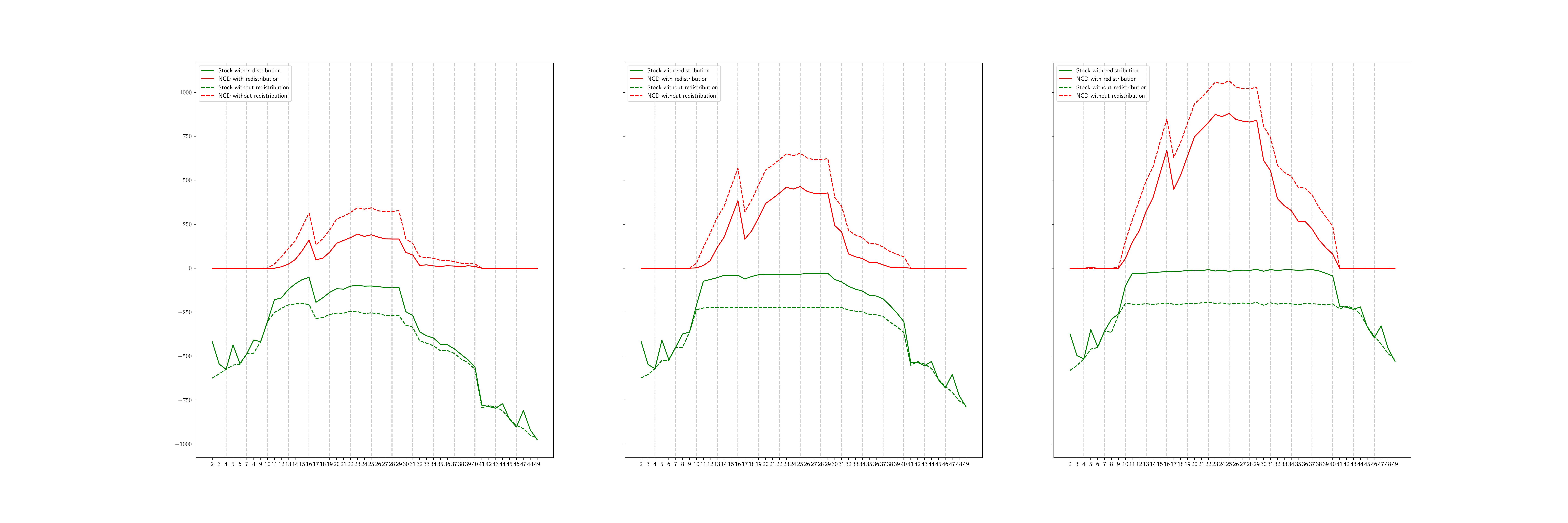}
\caption{Non-covered demand (red lines) and available stock (green lines) at each time period, if the \textit{optimistic} (left), \textit{real} (center) or \textit{pessimistic} (left) scenario occurs, with (continuous line) and without (dashed line) redistribution, in Madrid, for C-graph and objective function $\Phi_1^{\rm Regret}$.}\label{fig:ScenariosMadrid}
\end{figure}

\begin{figure}[h]
\hspace*{-1.75cm} \includegraphics[scale=0.22]{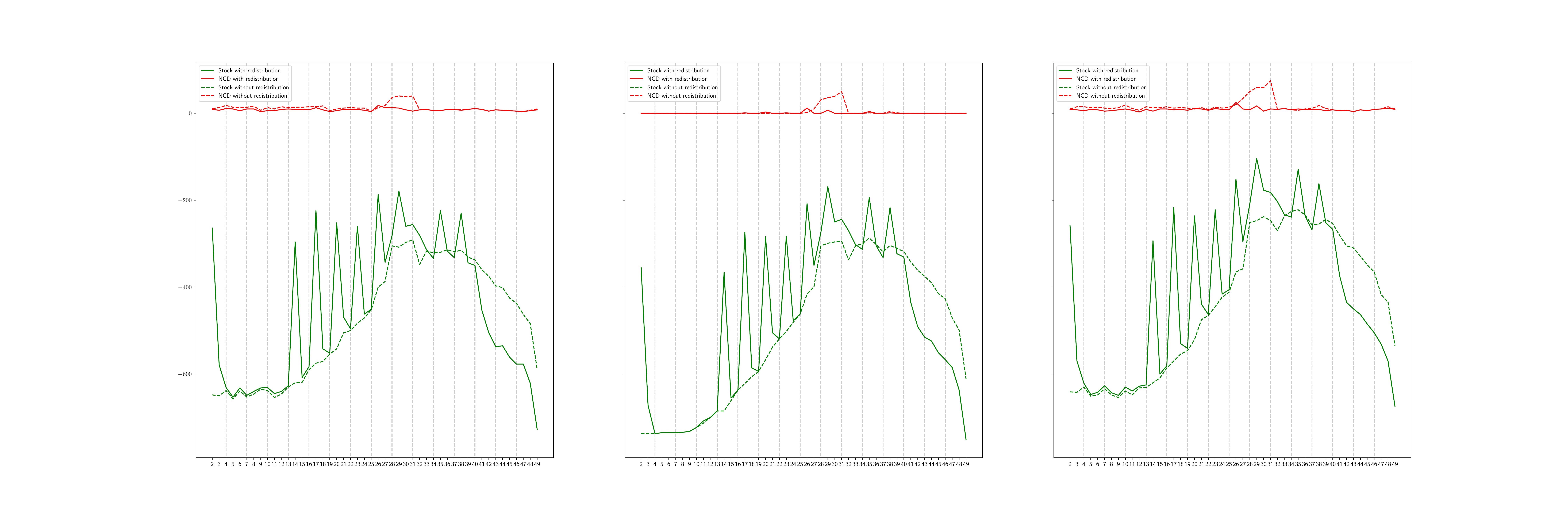}
\caption{Non-covered demand (red lines) and available stock (green lines) at each time period, if the \textit{optimistic} (left), \textit{real} (center) or \textit{pessimistic} (left) scenario occurs, with (continuous line) and without (dashed line) redistribution, in Andalucía, for C-graph and objective function $\Phi_4$.}\label{fig:ScenariosAnd}
\end{figure}

\subsubsection*{Complete Graph vs. Logistic Center Graph} 

We include in this section a comparison of the non-covered demand for the LC- and C-graphs. In Figure \ref{fig:CLvsCompMad}, we do it for the case of Madrid and objective functions $ \Phi_2^{\rm Regret}$ and $\Phi_4^{\rm Regret}$, and in Figure \ref{fig:CLvsCompAnd} for the case of Andalucía and objective functions $\Phi_2$ and $\Phi_4$. The red line represents the non-covered demand for the LC-graphs, and the blue line for the C-graphs. For the case of Madrid, the behavior under both graph structures is quite similar. Using logistic centers to redistribute the ventilators seems to perform a bit worse than not using them for more cases (according to the settings we used in the numerical study), but not for all of them. This could be due to the need of considering more than one logistic center since the demand is very high. For the case of Andalucía, using logistic centers seems to perform better and to lead to less high demand peaks. For example, when minimizing the total non-covered expected demand, that is, for objective function $\Phi_4$ (right graphic in Figure \ref{fig:CLvsCompAnd}), using logistic centers results much better than not using them.

\begin{figure}[h]

\hspace*{-0.75cm}\includegraphics[scale=0.3]{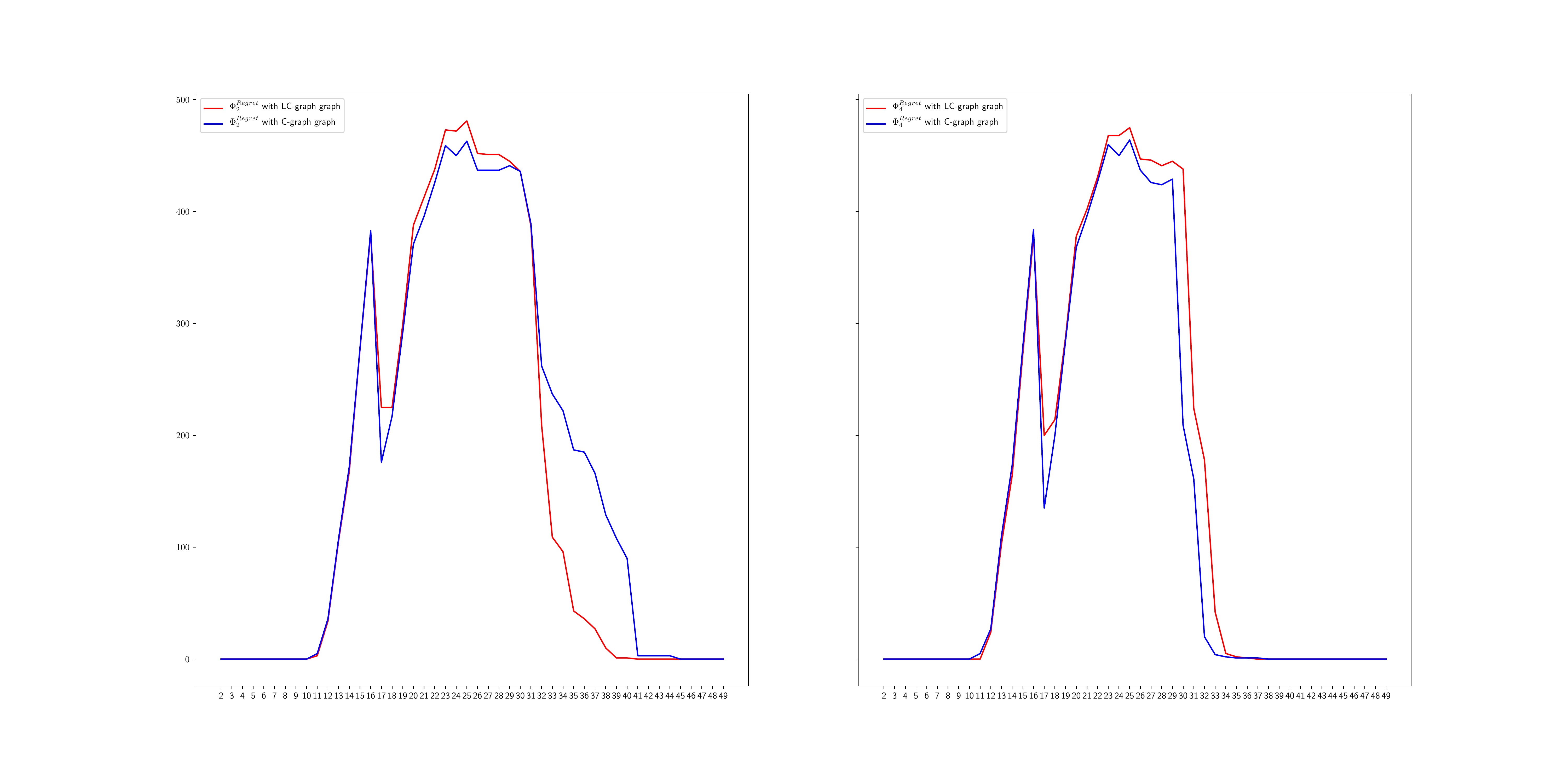}

\caption{Non-covered demand in Madrid at each time period for  the  LC-graph (red) and C-graph (blue), for objective functions $ \Phi_2^{\rm Regret}$ (left) and  $\Phi_4^{\rm Regret}$ (right) if the \textit{real} scenario occurs when considereing redistribution.}\label{fig:CLvsCompMad}
\end{figure}

\begin{figure}[h]
\hspace*{-0.75cm}\includegraphics[scale=0.3]{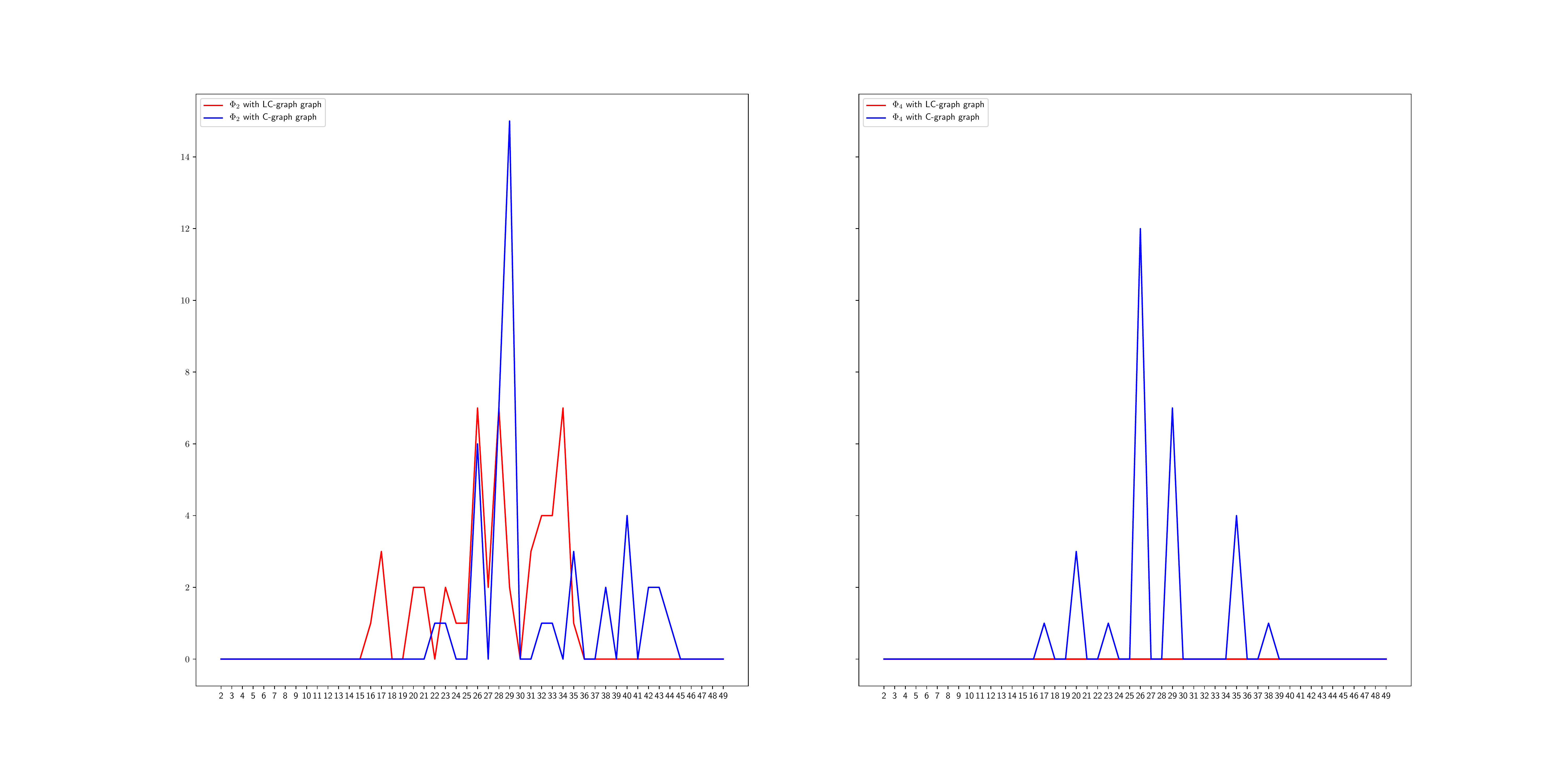}
\caption{Non-covered demand in Andalucía at each time period for  the  LC-graph (red) and C-graph (blue), for objective functions $ \Phi_2$ (left) and $\Phi_4$ (right), if the \textit{real} scenario occurs when considereing redistribution.}\label{fig:CLvsCompAnd}
\end{figure}

\subsubsection*{Different Criteria}

We compare in this last section the regret version (dashed line) versus the not regret version (continuous line) of different objective functions for Madrid in Figure \ref{fig:regvsMad}, and for Andalucía in Figure \ref{fig:regvsAnd}. We can observe that for the cases of Madrid included in Figure \ref{fig:regvsAnd} (objective function $\Phi_2$ and LC-graph (left) and $\Phi_4$ and C-graph (right)) the total non-covered demand of each objective function practically coincides for the regret and not regret version. The reason for this could be that the existence of few available stock make that the redistribution options are scarce and similar in both cases. For the case of Andalucía, the version without regret tends to perform better for most of the cases if real scenario occurs. However, there exist cases, see for example the case of $\Phi_2$ vs $\Phi_2^{\rm Regret}$ and C-graph (right graphic), for which there exist periods that the regret version covers more demand than the version without regret. 

In general, this tendency of the regret version to perform worse than the version without regret is maybe due to the election we made on the demand scenarios. Note that the real scenario, the one for which we are representing the NCD, is approximately an average of the other the other two scenarios, which benefits the objective functions without regret that average over the three scenarios. However, in cases in which the demand scenarios differ more, the regret version may perform better.

\begin{figure}[h]
\hspace*{-0.75cm}\includegraphics[scale=0.3]{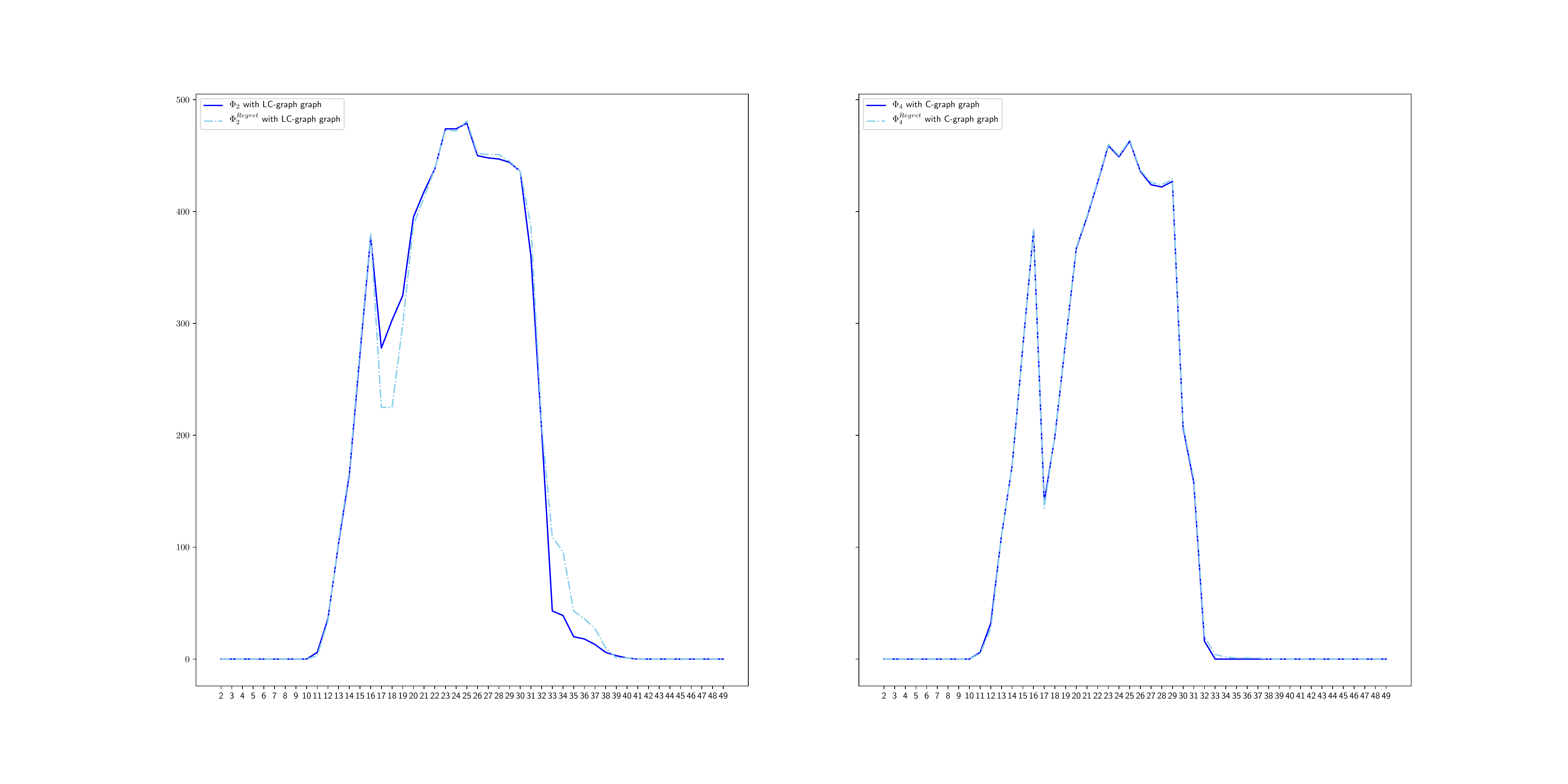}
\caption{Non-covered demand in Madrid, at each time period, for the regret (dashed line) and not regret (continuous line) versions of objective functions $\Phi_2$ (left) and $\Phi_4$ (right), for the LC-graph (left) and C-graph (right) if the \textit{real} scenario occurs when considereing redistribution.}\label{fig:regvsMad}
\end{figure}

\begin{figure}[h]
\hspace*{-0.75cm}\includegraphics[scale=0.3]{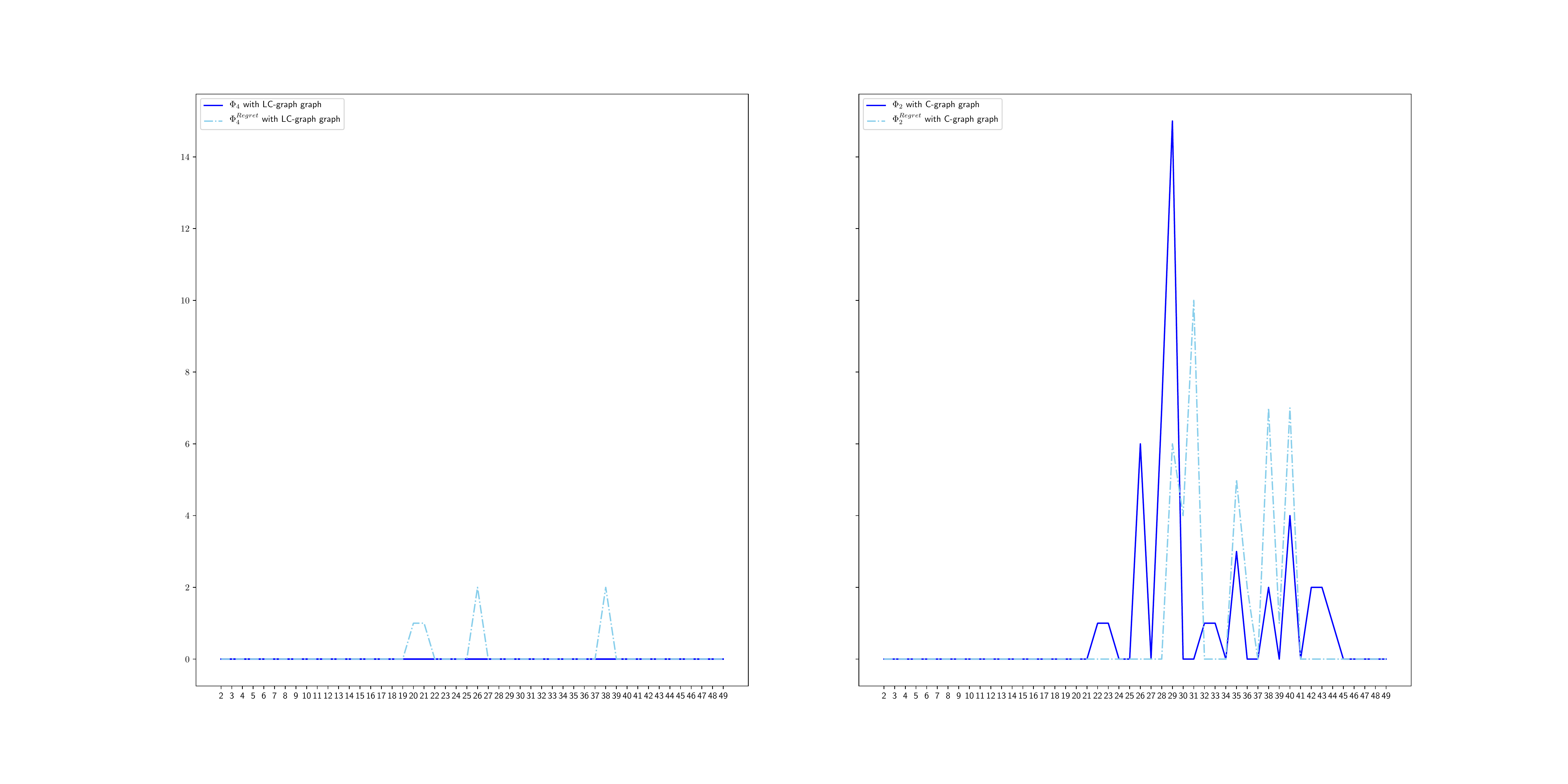}
\caption{Non-covered demand in Andalucía, at each time period, for the regret (dashed line) and not regret (continuous line) versions of objective functions $\Phi_4$ (left) and $\Phi_2$ (right), for the LC-graph (right) and C-graph (left) if the \textit{real} scenario occurs when considereing redistribution.}\label{fig:regvsAnd}
\end{figure}

\section{Conclusions and Extensions}\label{sec:6}

In this paper we propose a decision aid tools to determine optimal distribution and sharing strategies in emergency situations. The approach is motivated by the COVID-19 pandemic in which, in its first wave, a lack of emergency health equipments provoque hospitals collapse, and many patients were not adequately treated. Our approach allows decide how to distribute equipments through a given network of units along a time horizon and how to share the extra stock received at some of the periods, such that a global measure of the (stochastic) demand of the units is minimized. We provide an unified Mixed Integer Linear Programming formulation for the problem, able to model different distribution networks and different objective (robust and min-max regret) functions. We also propose a divide-et-conquer math-heuristic approach for the problem that provides feasible solutions of the problem in reasonable computational times. Finally, we analyze the case of the lack of mechanical invasive ventilators during the COVID-19 health crisis in two different Spanish regions. We run our approach with different settings, obtaining as the main conclusion that applying our approach leads to a significant increase in the number of severe patients that can be rightly assisted. Furthermore, we observed that an optimal redistribution of the extra stock must be based on the demand and not on the distribution of population.

An extension of our approach will be the topic of a forthcoming paper. In particular, we observe that the use of logistic centers when distributing equipments in emergency situations may be advisable in many cases, since they may allow more adequate distribution, may allow to store non-used equipments waiting for demand, quicker deliveries, etc. However, some regions do not still have the infrastructures of those centers or the ones that they have are not sufficient. Also, in emergency situations, it may be useful to use \textit{field logistic centers} during certain periods to improve the distribution of equipments during the demand peaks. In those cases, apart of deciding the amounts to be delivered and shared, one must decide where to locate new logistic centers. Our approach could be extended to this case with major modifications. In particular, the distribution network would not be known and is part of the decision (it depends on the position of the logistic centers), and then, it must be incorporated to the model, increasing considerably the complexity of the approach proposed in this paper. Observe that the model becomes a hub-and-spoke location problem in which the flows associated to commodities are decision variables of the model. Also, an interesting extension of this approach would be the incorporation of ordered weighted averaging  aggregations of the non-covered demands to construct solutions under other robust objective functions (see e.g., \cite{BPE13,BPE14} for an application of this type of aggregations in other logistic problems).

\section*{Acknowledgements}

The first and second autor were partially supported by research group SEJ-584 (Junta de Andaluc\'ia) and the third author by research group FQM-331 (Junta de Andaluc\'ia). The second author was supported by Spanish Ministry of Education and Science grant number PEJ2018-002962-A. The first and third authors were partially supported by project  MTM2016-74983-C2-1-R (MINECO, Spain) and projects P18-FR-1422 and US-1256951 (Junta de Andaluc\'ia). First author was also supported by Proyect \textit{NetMeetData} (Fundaci\'on BBVA - Big Data).

\renewcommand\refname{Sources}

\end{document}